\newtheorem{thm}{Theorem}[section]
\newtheorem{cor}[thm]{Corollary}
\newtheorem{lem}[thm]{Lemma}
\newtheorem{prop}[thm]{Proposition}
\theoremstyle{definition}
\newtheorem{defi}[thm]{Definition}
\newtheorem{ex}[thm]{Example}
\DeclareFontFamily{T1}{pzc}{} 
\DeclareFontShape{T1}{pzc}{m}{it}{<-> s * [1.15] pzcmi8t}{} 
\DeclareMathAlphabet{\mathpzc}{T1}{pzc}{m}{it} 
\newcommand{\A}{\mathcal{A}}
\newcommand{\B}{\mathcal{B}}
\newcommand{\Th}{\mathrm{Th}}
\newcommand{\M}{\mathpzc{M}}
\newcommand{\C}{\mathcal{C}}
\newcommand{\BB}{\mathscr{B}}
\newcommand{\BC}{\mathscr{C}}
\newcommand{\D}{\mathcal{D}}
\newcommand{\E}{\mathcal{E}}
\renewcommand{\phi}{\varphi}
\newcommand{\conc}{%
  \mathord{
    \mathchoice
    {\raisebox{1ex}{\scalebox{.7}{$\frown$}}}
    {\raisebox{1ex}{\scalebox{.7}{$\frown$}}}
    {\raisebox{.7ex}{\scalebox{.5}{$\frown$}}}
    {\raisebox{.7ex}{\scalebox{.5}{$\frown$}}}
  }
}
\begin{document}

\title{Natural Factors of the Medvedev Lattice Capturing IPC}
\author[R. Kuyper]{Rutger Kuyper}
\address[Rutger Kuyper]{Radboud University Nijmegen\\
Department of Mathematics\\
P.O.\ Box 9010, 6500 GL Nijmegen, the Netherlands.}
\email{r.kuyper@math.ru.nl}
\thanks{Research supported by NWO/DIAMANT grant 613.009.011 and by 
John Templeton Foundation grant 15619: `Mind, Mechanism and Mathematics: Turing Centenary Research Project'.}
\subjclass[2010]{03D30, 03B20, 03G10}
\keywords{Medvedev degrees, Intuitionistic logic, Jankov's logic}
\date{\today}
\maketitle

\begin{abstract}
Skvortsova showed that there is a factor of the Medvedev lattice which captures intuitionistic propositional logic (IPC). However, her factor is unnatural in the sense that it is constructed in an ad hoc manner. We present a more natural example of such a factor.
We also show that the theory of every non-trivial factor of the Medvedev lattice is contained in Jankov's logic, the deductive closure of IPC plus the weak law of the excluded middle $\neg p \vee \neg\neg p$. This answers a question by Sorbi and Terwijn.
\end{abstract}

\section{Introduction}

The Brouwer--Heyting--Kolmogorov interpretation for intuitionistic logic gives an informal relation between proofs and constructions. Since computations are a special kind of construction, it therefore seems reasonable to suspect that there is also a relation between constructive proofs and computations.
There are several approaches to making such a connection in a mathematically rigorous way. Probably the best known of these is Kleene realisability \cite{kleene-1945}, which turns out to correspond to a proper extension of intuitionistic logic. Both Kleene realisability and variants of it have been well-studied, see e.g.\ van Oosten \cite{vanoosten-2008}.

Medvedev \cite{medvedev-1955} followed an alternative path, in an attempt to formalise Kol\-mo\-go\-rov's calculus of problems. He introduced the \emph{Medvedev lattice} $\M$, which is a lattice arising from computability-theoretic considerations. Furthermore, it is a Brouwer algebra and therefore provides a semantics for an intermediate propositional logic, i.e.\ a propositional logic lying between intuitionistic propositional logic (IPC) and classical logic. Unfortunately, this approach also turns out to capture a proper extension of IPC: namely, IPC plus the weak law of the excluded middle $\neg p \vee \neg\neg p$. The same holds for the closely related \emph{Muchnik lattice} $\M_w$, which was introduced by Muchnik in \cite{muchnik-1963}.

However, this does not mean it is impossible to capture IPC using the Medvedev lattice. For any Brouwer algebra $\BB$ and any $x \in \BB$, the factor $\BB / \{y \in \BB \mid y \geq x\}$ (which we will denote by $\BB / x$) is also a Brouwer algebra. Thus one might ask if the next-best thing holds for the Medvedev lattice: is there an $\A \in \M$ such that the theory of $\M / \A$ is exactly $\mathrm{IPC}$? Quite impressively, Skvortsova \cite{skvortsova-1988-en} showed that there is such a principal factor of the Medvedev lattice which captures IPC. Unfortunately, the class $\A$ generating this factor is unnatural in the sense that it is constructed in an ad hoc manner. This leads to the natural question, posed in Terwijn \cite{terwijn-2006}: are there any natural principal factors of the Medvedev lattice which have IPC as their theory?

For the Muchnik lattice one can ask a similar question. Sorbi and Terwijn \cite{sorbi-terwijn-2012} showed that there is also a principal factor of the Muchnik lattice with IPC as its theory, but it suffers from the same problem as Skvortsova's factor of the Medvedev lattice. In \cite{kuyper-2013-2}, the author has shown that there are natural principal factors of the Muchnik lattice which capture IPC. These factors are defined using common notions from computability theory, such as lowness, 1-genericity, hyperimmune-freeness and computable traceability.

In this paper we present progress towards an affirmative answer to the question formulated above, by showing that there are principal factors of the Medvedev lattice capturing IPC which are more natural than the one given by Skvortsova.  These factors arise from the computability-theoretic notion of a \emph{computably independent set}: that is, a set $A$ such that for every $i \in \omega$ we have that $\bigoplus_{j \neq i} A^{[j]} \not\geq_T A^{[i]}$, where $A^{[i]}$ is the $i^\textrm{th}$ column of $A$, i.e.\ $A^{[i]}(n) = A(\langle i,n \rangle)$ . We can now state the main theorem of this paper.

\begin{restatable}{thm}{mainthm}
\label{thm-main}
Let $A$ be a computably independent set. Then
\[\mathrm{Th}\left(\M / \left\{i \conc f \mid f \geq_T A^{[i]}\right\}\right) = \mathrm{IPC}.\]
\end{restatable}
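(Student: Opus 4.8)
The plan is to establish the two inclusions separately, writing $\A_0 = \{i \conc f \mid f \geq_T A^{[i]}\}$ for the class generating the factor. The inclusion $\mathrm{IPC} \subseteq \Th(\M / \A_0)$ is immediate: every Brouwer algebra validates IPC, and, as recorded in the introduction, every factor $\BB/x$ of a Brouwer algebra is again a Brouwer algebra, so its theory is an intermediate logic containing IPC. All of the work therefore goes into the reverse inclusion $\Th(\M/\A_0) \subseteq \mathrm{IPC}$.

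For the reverse inclusion I would invoke completeness of IPC with respect to the class of finite Brouwer algebras: since $\mathrm{IPC}$ is exactly the set of formulas evaluating to the least element under every valuation in every finite Brouwer algebra, it suffices to embed each finite Brouwer algebra $B$ into $\M/\A_0$ as a Brouwer algebra, i.e.\ by an injective map $\iota$ preserving $\wedge$, $\vee$, $\to$, $0$ and $1$. Indeed, given $\phi \notin \mathrm{IPC}$, completeness yields a finite Brouwer algebra $B$ and a valuation refuting $\phi$; composing with such an $\iota$ produces a valuation into $\M/\A_0$ refuting $\phi$, so $\phi \notin \Th(\M/\A_0)$. (Equivalently, one need only embed the members of a fixed sequence of finite Brouwer algebras that is already complete for IPC.)

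The heart of the argument is the construction of $\iota$, and this is where computable independence enters. Using Birkhoff duality I would represent $B$ by its finite poset $P$ of join-irreducible elements, assign to the elements of $P$ distinct columns of $A$ — the computable independence of $A$ supplying an unbounded supply of pairwise Turing-independent columns $A^{[i]}$, of which only finitely many are needed — and build for each $p \in P$ a mass problem $\B_p$ coded through the $i \conc f$ pattern so that the degrees $\deg_T(A^{[i]})$ realise the order and the incomparabilities of $P$. The map $\iota$ then sends $x \in B$ to the join, computed in $\M/\A_0$, of the classes of the $\B_p$ with $p \leq x$. One must check that $\iota$ is an injective lattice homomorphism, that it sends $0$ to the bottom and $1$ to the class of $\A_0$ (the top of the factor), and, crucially, that it preserves $\to$ once implication is computed modulo $\A_0$.

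The main obstacle is precisely this last point: proving $\iota(x \to y) = \iota(x) \to \iota(y)$ in $\M/\A_0$. Exhibiting a Turing functional witnessing one inequality is a routine coding argument, but the converse requires showing that no unexpected Medvedev reduction survives the quotient by $\A_0$. This is exactly the step at which the full lattice $\M$ fails: there the top element is join-irreducible, which forces the weak law of the excluded middle and pins the theory at Jankov's logic. Factoring out $\A_0$ replaces the top by a meet of the Turing-independent columns, and the non-reducibility of any column from the join of the others is what collapses the implication to precisely its Heyting value rather than to something strictly larger. Turning computable independence into the quantitative absence of these offending reductions modulo $\A_0$, uniformly for every pair $x,y \in B$, is where the care is required — and where the naturalness of the hypothesis is meant to replace the ad hoc bookkeeping of Skvortsova's construction.
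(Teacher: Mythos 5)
Your plan for the hard inclusion is broken at its very first reduction, and in a concrete way. You propose to embed \emph{every} finite Brouwer algebra into $\M/\A_0$ by an injective map preserving $\oplus$, $\otimes$, $\to$, $0$ and $1$. No such family of embeddings can exist, for \emph{any} choice of $\A_0$: in $\M$ the bottom element is meet-irreducible (a computable member of $\C \otimes \D$ has the form $0\conc f$ with $f \in \C$ computable or $1\conc g$ with $g \in \D$ computable, so $\C \otimes \D \equiv_\M 0$ forces $\C \equiv_\M 0$ or $\D \equiv_\M 0$), and the factor $\M/\A_0$, being the interval $[0,\A_0]_\M$ with the same meet operation, inherits this property; hence the four-element Boolean algebra, whose bottom is the meet of two non-bottom elements, admits no embedding preserving $0$ and $\otimes$. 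Your parenthetical fallback (restrict to a fixed complete sequence of algebras) can dodge this particular obstruction, since IPC is complete for finite Brouwer algebras with meet-irreducible bottom --- exactly the family $\BB_n/x$ of Proposition \ref{prop-free-n-ipc} --- but the surviving plan still demands an embedding into $\M/\A_0$ \emph{itself}, with the top of the finite algebra landing precisely on the degree of $\A_0$. The paper never constructs such an embedding and does not need one: it embeds $\BB_n$ into an interval $\bigl[\B,\overline{\A}\bigr]_\M$ that has nothing to do with $\A_0$ a priori (Theorem \ref{thm-pow-embed} plus the canonicity machinery), and then transfers theories via the \emph{surjective homomorphism} $u \mapsto u \oplus \B$ from $[0,\A_0]_\M$ onto $[\B, \B\oplus\A_0]_\M$ (Lemma \ref{lem-intervals}). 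Note that $[\B,\B\oplus\A_0]_\M$ need not be contained in $[0,\A_0]_\M$ at all: the finite algebra is realised inside a homomorphic image of the factor, not inside the factor, and this quotient trick is precisely what makes your much stronger (and unproven) top-preserving embedding unnecessary.

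Independently of this, the computability-theoretic core of the theorem is absent from your sketch. What makes the interval comparison work is the identity \eqref{thm-main-eqn1}: the join of $\A_0$ with the bottom $\B$ of the embedded copy of $\BB_n$ must be Medvedev-equivalent to the element $\alpha(X_1)\otimes\dots\otimes\alpha(X_k)$ corresponding to $x$; only then does Lemma \ref{lem-intervals} yield $\mathrm{Th}(\M/\A_0) \subseteq \mathrm{Th}(\BB_n/x)$. This is not achieved by realising the dual poset with the columns $A^{[i]}$ themselves, as you propose. In the paper the realising degrees are \emph{new} sets $D_1,\dots,D_n$: one first extends $A$ to a computably independent $A\oplus B$ (Lemma \ref{lem-extend}) and then applies the relativised Posner--Robinson join theorem (Theorem \ref{thm-splitting-ext}) to obtain $D_i$ computing exactly the columns $A^{[j]}$ with $i \in X_j$, while $D_i \oplus A^{[j]}$ for every other $j$, $D_i \oplus B^{[j]}$ for $j \neq i$, and consequently $D_i \oplus D_j$ for $j \neq i$, all compute $(A\oplus B)'$; this simultaneously makes $D_1,\dots,D_n$ a strong upwards antichain in $\A = \omega^\omega \setminus C(\{(A\oplus B)'\})$ and forces \eqref{thm-main-eqn1}. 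Nothing in your outline plays the role of Theorem \ref{thm-splitting-ext} or of \eqref{thm-main-eqn1}; moreover you invoke only \emph{pairwise} Turing independence of the columns, whereas the full strength of computable independence (no column computed by the join of all the others) is what Lemma \ref{lem-extend} and the hypotheses of Theorem \ref{thm-splitting-ext} require. Since you yourself flag preservation of $\to$ modulo $\A_0$ as the unresolved point, and that point together with \eqref{thm-main-eqn1} is essentially the entire proof, what you have is the easy inclusion plus a plan whose decisive steps are missing and whose target (a subalgebra of the factor rather than of a quotient of it) is the wrong one.
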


The existence of computably independent sets was first proven by Kleene and Post \cite{kleene-post-1954}. In fact, almost all sets are computably independent: both in the measure-theoretic sense, because every $1$-random is computably independent by van Lambalgen's theorem (see e.g.\ Downey and Hirschfeldt \cite[Theorem 6.9.1]{downey-hirschfeldt-2010}), and also in the Baire category sense, because every $1$-generic is computably independent by the genericity analogue of van Lambalgen's theorem (see e.g.\ \cite[Theorem 8.20.1]{downey-hirschfeldt-2010}).

We note that the factor from Theorem \ref{thm-main} is not nearly as natural as the factors for the Muchnik lattice from \cite{kuyper-2013-2}, where for example it is shown that
\[\mathrm{Th}\left(\M_w / \left\{f \mid f \text{ is not low}\right\}\right) = \mathrm{IPC}.\]
(Note that this factor does not work for the Medvedev lattice by \cite[p.\ 138]{skvortsova-1988-en}.)
On the other hand, the factor from Theorem \ref{thm-main} is far more natural than the one given by Skvortsova: our factor is easily definable from just a computably independent set, which occurs naturally in computability theory. Furthermore, while Skvortsova used a deep result by Lachlan, we manage to work around this and therefore our proof is more elementary.

We also study a question posed by Sorbi and Terwijn in \cite{sorbi-terwijn-2008}. As mentioned above, the theory of the Medvedev lattice is equal to Jankov's logic $\mathrm{Jan}$, the deductive closure of $\mathrm{IPC}$ plus the weak law of the excluded middle $\neg p \vee \neg\neg p$. Let $0'$ be the mass problem consisting of all non-computable functions. Recall that we say that a mass problem is \emph{Muchnik} if it is upwards closed under Turing reducibility. In \cite{sorbi-terwijn-2008} it is shown that for all Muchnik $\B >_\M 0'$ the theory of the factor $\M / \B$ is contained in $\mathrm{Jan}$. Therefore, Sorbi and Terwijn asked: is $\mathrm{Th}(\M / \B)$ contained in $\mathrm{Jan}$ for all mass problems $\B >_\M 0'$?

Sorbi and Terwijn also proposed a connected question: does every $\B >_\M 0'$ bound a join-irreducible Medvedev degree $>_\M 0'$? By their results, this would imply that $\mathrm{Th}(\M / \B)$ is always contained in $\mathrm{Jan}$. However, they conjectured the answer to this connected question to be negative, a fact which was later proven by Shafer \cite{shafer-2011}. Nonetheless, in the same paper, Shafer widened the class of mass problems $\B$ for which $\mathrm{Th}(\M / \B) \subseteq \mathrm{Jan}$ holds to those $\B$ which bound a `pseudo-meet' of a countable sequence of join-irreducible degrees. Unfortunately, Shafer also showed that this still does not cover all $\B >_\M 0'$.

We give a positive answer to Sorbi and Terwijn's question. This is accomplished by showing that a relativisation of Theorem \ref{thm-main} holds, i.e.\ that for every $\B > 0'$ there is in fact a factor $\C \leq_\M \B$ such that $\mathrm{Th}(\M / \C) = \mathrm{IPC}$.

Our notation is mostly standard. We denote the natural numbers by $\omega$, Cantor space by $2^\omega$ and Baire space by $\omega^\omega$. For any set $X \subseteq \omega^\omega$ we denote by $C(X)$ the upper cone $\{f \in \omega^\omega \mid \exists g \in X (f \geq_T g)\}$. By $\conc$ we denote concatenation of strings. For any set $\A \subseteq \omega^\omega$ we denote by $\overline{\A}$ its complement in $\omega^\omega$. For unexplained notions from computability theory, we refer to Odifreddi \cite{odifreddi-1989}, for the Muchnik and Medvedev lattices, we refer to the surveys of Sorbi \cite{sorbi-1996} and Hinman \cite{hinman-2012} (but we use the notation from Sorbi and Terwijn \cite{sorbi-terwijn-2008}), and finally for unexplained notions from lattice theory we refer to Balbes and Dwinger \cite{balbes-dwinger-1975}.

\section{Preliminaries}

First, let us recall the definition of the Medvedev lattice.

\begin{defi}{\rm (Medvedev \cite{medvedev-1955})}
Let $\A,\B \subseteq \omega^\omega$ (we will call subsets of $\omega^\omega$ \emph{mass problems}). 
We say that $\A$ \emph{Medvedev reduces to} $\B$ (denoted by $\A \leq_\M \B$) if there exists a Turing functional $\Phi$ such that $\Phi(\B) \subseteq \A$. If both $\A \leq_\M \B$ and $\B \leq_\M \A$ we say that $\A$ and $\B$ are \emph{Medvedev equivalent} (denoted by $\A \equiv_\M \B$). The equivalence classes of mass problems under Medvedev equivalence are called \emph{Medvedev degrees}, and the class of all Medvedev degrees is denoted by $\M$.
\end{defi}

Instead of the usual notation $\vee$ for joins (least upper bounds) and $\wedge$ for meets (greatest lower bounds) in lattices, we use $\oplus$ respectively $\otimes$. The reason for this is that we will shortly see that $\oplus$ corresponds to logical conjunction $\wedge$, while $\otimes$ corresponds to logical disjunction $\vee$.

\begin{defi}{\rm(McKinsey and Tarski \cite{mckinsey-tarski-1946})}
A \emph{Brouwer algebra} is a bounded distributive lattice together with a binary \emph{implication operator} $\to$ satisfying:
\[a \oplus c \geq b \text{ if and only if } c \geq a \to b\]
i.e.\ $a \to b$ is the least element $c$ satisfying $a \oplus c \geq b$.
\end{defi}

As the name suggests, the Medvedev lattice is a lattice. In fact, it is also a Brouwer algebra, as the next proposition shows.

\begin{prop}{\rm (\cite{medvedev-1955})}
The Medvedev lattice is a Brouwer algebra under the operations induced by:
\begin{align*}
\A \oplus \B &= \{f \oplus g \mid f \in \A \text{ and } g \in \B\}\\
\A \otimes \B &= \{0 \conc f \mid f \in \A\} \cup \{1 \conc g \mid g \in \B\}\\
\A \to \B &= \{n \conc f \mid \forall g \in \A (\Phi_n(f \oplus g) \in \B\}.
\end{align*}
Furthermore, the bottom element $0$ is the Medvedev degree of $\omega^\omega$, while the top element $1$ is the Medvedev degree of $\emptyset$.
\end{prop}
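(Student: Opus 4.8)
The plan is to verify the required structure directly from the definition $\A \leq_\M \B \iff \exists\,\Phi\ \Phi(\B) \subseteq \A$, treating the three operations in turn and deferring the genuinely content-bearing step, the adjunction for $\to$, to the end. Throughout I write $f \oplus g$ for the interleaving of two functions and use freely that its even and odd parts are uniformly computable from it. Since $\oplus$, $\otimes$ and $\to$ will each be characterised purely order-theoretically---as least upper bound, greatest lower bound, and as the least $\C$ with $\A \oplus \C \geq_\M \B$---this characterisation simultaneously shows they are well defined on Medvedev degrees, so I may argue at the level of representatives.

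First I would check that $\oplus$ is the join. The projections $f \oplus g \mapsto f$ and $f \oplus g \mapsto g$ witness $\A \leq_\M \A \oplus \B$ and $\B \leq_\M \A \oplus \B$, and if $\Phi(\C) \subseteq \A$ and $\Psi(\C) \subseteq \B$ then $h \mapsto \Phi(h) \oplus \Psi(h)$ maps $\C$ into $\A \oplus \B$, so $\A \oplus \B$ is the least upper bound. Dually, $\otimes$ is the meet: $f \mapsto 0 \conc f$ and $g \mapsto 1 \conc g$ witness $\A \otimes \B \leq_\M \A$ and $\A \otimes \B \leq_\M \B$, while a common lower bound $\C$, reducing to $\A$ via $\Phi$ and to $\B$ via $\Psi$, also reduces to $\A \otimes \B$: on input $h$ read $h(0)$ and apply $\Phi$ to the tail when $h(0) = 0$ and $\Psi$ to the tail when $h(0) = 1$. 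Finally the identity functional gives $\omega^\omega \leq_\M \B$ for every $\B$, and $\Phi(\emptyset) = \emptyset \subseteq \B$ gives $\B \leq_\M \emptyset$, so $\omega^\omega$ and $\emptyset$ are the bottom and top. Distributivity I would not check by hand: it is standard that the presence of a residual for the join forces the lattice to be distributive, so it comes for free once $\to$ is handled.

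The crux is the adjunction $\A \oplus \C \geq_\M \B \iff \C \geq_\M \A \to \B$. For the direction from left to right, suppose $\Phi_e(f \oplus h) \in \B$ for all $f \in \A$ and $h \in \C$. Fix, uniformly in $e$, an index $m$ with $\Phi_m(x) = \Phi_e(b \oplus a)$ where $a$ and $b$ are the even and odd parts of $x$. Then for every $h \in \C$ the string $m \conc h$ lies in $\A \to \B$, since $\Phi_m(h \oplus g) = \Phi_e(g \oplus h) \in \B$ for all $g \in \A$; as $m$ does not depend on $h$, the map $h \mapsto m \conc h$ is a single functional sending $\C$ into $\A \to \B$, giving $\A \to \B \leq_\M \C$. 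Conversely, if $\Theta(\C) \subseteq \A \to \B$, then on input $g \oplus h$ with $g \in \A$ and $h \in \C$ I compute $\Theta(h) = n \conc f$, read off $n$ and $f$, and output $\Phi_n(f \oplus g)$, which lies in $\B$ by the defining property of $\A \to \B$; this witnesses $\B \leq_\M \A \oplus \C$.

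The only real subtlety is this left-to-right direction, where one must encode the $\C$-parameter $h$ into the payload of $m \conc h$ while keeping the functional index $m$ independent of $h$; that uniformity is exactly what makes $h \mapsto m \conc h$ a genuine single reduction rather than a family of reductions, and it is where I expect the main obstacle to lie. Everything else is bookkeeping with the even/odd decomposition, and I anticipate no further difficulty.
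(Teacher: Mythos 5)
Your proposal is correct. The paper does not actually prove this proposition --- it is stated with a citation to Medvedev's 1955 paper --- and your verification is exactly the standard argument that citation stands in for: projections and merging for $\oplus$ as join, case-splitting on the first value for $\otimes$ as meet, the vacuous reductions for the bounds, the swap-of-coordinates index $m$ (whose independence from $h$ is indeed the one point requiring care) together with a universal functional for the two directions of the adjunction $\A \oplus \C \geq_\M \B \iff \C \geq_\M \A \to \B$, and distributivity obtained for free from residuation, with well-definedness on degrees following from the purely order-theoretic characterisations.
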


The main reason Brouwer algebras are interesting is because we can use them to give algebraic semantics for IPC, as witnessed by the next definition and the results following after it.

\begin{defi}{\rm(\cite{mckinsey-tarski-1948})}
Let $\phi(x_1,\dots,x_n)$ be a propositional formula with free variables among $x_1,\dots,x_n$, let $\BB$ be a Brouwer algebra and let $b_1,\dots,b_n \in \BB$. Let $\psi$ be the formula in the language of Brouwer algebras obtained from $\phi$ by replacing logical disjunction $\vee$ by $\otimes$, logical conjunction $\wedge$ by $\oplus$, logical implication $\to$ by Brouwer implication $\to$ and the false formula $\bot$ by $1$ (we view negation $\neg\alpha$ as $\alpha \to \bot$). We say that $\phi(b_1,\dots,b_n)$ \emph{holds in $\BB$} if $\psi(b_1,\dots,b_n) = 0$. Furthermore, we define the \emph{theory} of $\BB$ (notation: $\Th(\BB)$) to be the set of those formulas which hold for every valuation, i.e.\
\[\Th(\BB) = \{\phi(x_1,\dots,x_m) \mid \forall b_1,\dots,b_m \in \BB(\phi(b_1,\dots,b_m) \text{ holds in } \BB)\}.\]
\end{defi}

The following soundness result is well-known and directly follows from the observation that all rules in some fixed deduction system for IPC preserve truth.

\begin{prop}{\rm(\cite[Theorem 4.1]{mckinsey-tarski-1948})}
For every Brouwer algebra $\BB$: $\mathrm{IPC} \subseteq \Th(\BB)$.
\end{prop}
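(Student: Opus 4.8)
The plan is to fix a Hilbert-style deduction system for $\mathrm{IPC}$ --- with the usual axiom schemas for $\wedge$, $\vee$, $\to$ and $\bot$, and modus ponens as the sole inference rule --- and to prove by induction on the length of a derivation that every theorem of $\mathrm{IPC}$ lies in $\Th(\BB)$. Writing $\psi_\phi$ for the translation of a formula $\phi$ into the language of Brouwer algebras (as in the preceding definition), recall that $\phi$ holds under a given valuation precisely when $\psi_\phi$ evaluates to $0$. Thus it suffices to establish two things: that the translation of every axiom evaluates to $0$ under every valuation, and that modus ponens preserves this property. The base case and inductive step of the derivation-length induction then correspond exactly to these two claims.

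The rule preservation is the heart of the matter, and it follows directly from the residuation property defining $\to$. Suppose $\phi$ and $\phi \to \rho$ both hold under a given valuation, so that $\psi_\phi = 0$ and $\psi_\phi \to \psi_\rho = 0$. Since $\psi_\phi \to \psi_\rho$ is by definition the least $c$ with $\psi_\phi \oplus c \geq \psi_\rho$, the equality $\psi_\phi \to \psi_\rho = 0$ shows that $c = 0$ already satisfies this inequality; as $\psi_\phi \oplus 0 = \psi_\phi$, we get $\psi_\phi \geq \psi_\rho$. Because $\psi_\phi = 0$, this forces $\psi_\rho = 0$, so $\rho$ holds as well.

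It then remains to verify that each axiom schema translates to $0$ under every valuation. Each such check is a routine computation in a bounded distributive lattice equipped with the residuated operation $\to$: for instance, the translation of $\phi \wedge \chi \to \phi$ is $(a \oplus b) \to a$, which equals $0$ because $(a \oplus b) \oplus 0 = a \oplus b \geq a$ shows that $0$ already satisfies the defining inequality; the implication and disjunction axioms are handled similarly, using residuation together with distributivity. I expect this step to be the only laborious part, but it is pure bookkeeping rather than a genuine obstacle. A slicker alternative avoids treating the schemas one by one: the reversed order on $\BB$ turns it into a Heyting algebra in which $\oplus$, $\otimes$, $\to$ and $1$ become meet, join, Heyting implication and bottom respectively, and under this duality the condition ``$\phi$ holds'' becomes ``$\phi$ evaluates to the top''. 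The proposition is then precisely the classical soundness of $\mathrm{IPC}$ with respect to Heyting algebras, which is exactly the content of the cited result of McKinsey and Tarski.
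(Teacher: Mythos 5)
Your proposal is correct and takes essentially the same approach as the paper: the paper gives no argument of its own beyond remarking that soundness ``directly follows from the observation that all rules in some fixed deduction system for IPC preserve truth'' and citing Chagrov--Zakharyaschev, and that is precisely the derivation-length induction you carry out (your modus ponens step via residuation and the axiom checks are right). Your closing observation --- that order-reversal turns $\BB$ into a Heyting algebra, making the statement literally classical soundness of $\mathrm{IPC}$ --- is exactly the content of the result the paper cites.
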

\begin{proof}
See e.g.\ Chagrov and Zakharyaschev \cite[Theorem 7.10]{chagrov-zakharyaschev-1997}.
\end{proof}

Conversely, the class of Brouwer algebras is complete for $\mathrm{IPC}$.

\begin{thm}{\cite[Theorem 4.3]{mckinsey-tarski-1948})}
\[\bigcap\{\mathrm{Th}(\BB) \mid \BB \text{ a Brouwer algebra}\}  = \mathrm{IPC}\]
\end{thm}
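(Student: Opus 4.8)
The plan is to prove the two inclusions separately. The inclusion $\mathrm{IPC} \subseteq \bigcap\{\Th(\BB) \mid \BB \text{ a Brouwer algebra}\}$ is immediate from the preceding soundness proposition, which already gives $\mathrm{IPC} \subseteq \Th(\BB)$ for every single Brouwer algebra $\BB$. So the entire content lies in the reverse inclusion $\bigcap\{\Th(\BB) \mid \BB \text{ a Brouwer algebra}\} \subseteq \mathrm{IPC}$, a completeness statement: I must produce, for every non-theorem $\phi$ of $\mathrm{IPC}$, some Brouwer algebra $\BB$ together with a valuation under which $\phi$ fails to evaluate to $0$.

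The standard tool for this is the Lindenbaum--Tarski algebra. I would let $\mathrm{Form}$ denote the set of propositional formulas and define $\phi \sim \psi$ iff $\mathrm{IPC} \vdash \phi \leftrightarrow \psi$. Using the replacement theorem for $\mathrm{IPC}$ (provably equivalent formulas may be substituted for one another), $\sim$ is a congruence with respect to $\vee, \wedge, \to$ and the constant $\bot$, so the connectives descend to the quotient $L = \mathrm{Form}/\!\sim$. Because the paper's convention makes a Brouwer algebra the order-dual of a Heyting algebra---truth is the bottom element $0$ and $\bot$ is interpreted as the top element $1$---I would equip $L$ with the dual order $[\phi] \leq [\psi] \iff \mathrm{IPC} \vdash \psi \to \phi$. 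A short calculation using elementary $\mathrm{IPC}$-derivations then shows that the join $\oplus$ is induced by $\wedge$, that the meet $\otimes$ is induced by $\vee$, that the Brouwer implication $[\phi] \to [\psi]$ is induced by logical implication $[\phi \to \psi]$ (this is where currying, $\vdash (\phi \wedge \chi) \to \psi$ iff $\vdash \chi \to (\phi \to \psi)$, is used to match the defining property of $\to$), that $[\bot]$ is the top $1$, and that the class of theorems is the bottom $0$. Distributivity of $L$ follows from the provability in $\mathrm{IPC}$ of the distributive laws, so $L$ is indeed a Brouwer algebra.

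Finally I would consider the canonical valuation sending each propositional variable $x$ to its own class $[x] \in L$. By induction on the structure of formulas, using the identities just established, the interpretation of any $\phi$ under this valuation is exactly $[\phi]$. Hence $\phi$ holds in $L$ (i.e.\ evaluates to $0$) if and only if $[\phi] = 0$, which by construction of the order happens if and only if $\mathrm{IPC} \vdash \phi$. Consequently any $\phi \notin \mathrm{IPC}$ satisfies $[\phi] \neq 0$, so $\phi \notin \Th(L)$, and since $L$ is a Brouwer algebra this witnesses $\phi \notin \bigcap\{\Th(\BB) \mid \BB \text{ a Brouwer algebra}\}$, completing the nontrivial inclusion.

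I expect the main obstacle to be bookkeeping rather than conceptual: the chief subtlety is keeping the order-duality straight, since the paper's ``holds $=$ equals $0$'' convention inverts the familiar Heyting-algebra picture, and correspondingly one must check that the Brouwer implication defined by ``$a \oplus c \geq b$ iff $c \geq a \to b$'' really is induced by logical implication rather than by some dual of it. Verifying that $\sim$ is a congruence and that $L$ is distributive is routine, but it relies on having fixed a concrete deduction system for $\mathrm{IPC}$ and invoking its basic derived rules.
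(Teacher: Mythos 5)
Your proposal is mathematically correct, but note that the paper does not prove this statement at all: it is stated purely as a citation to McKinsey--Tarski \cite[Theorem 4.3]{mckinsey-tarski-1948}, just as the preceding soundness proposition is delegated to Chagrov--Zakharyaschev. So there is no ``paper proof'' to compare against; what you have supplied is the standard Lindenbaum--Tarski completeness argument, which is essentially the classical proof behind the cited result. Your handling of the one genuinely delicate point --- the order-duality of the paper's convention, where $\oplus$ is join yet interprets $\wedge$, truth is the bottom element $0$, and $\bot$ is the top element $1$ --- is right: with the order $[\phi] \leq [\psi] \iff \mathrm{IPC} \vdash \psi \to \phi$, the join is induced by $\wedge$, the meet by $\vee$, and the currying equivalence $\vdash (\phi \wedge \chi) \to \psi$ iff $\vdash \chi \to (\phi \to \psi)$ shows that $[\phi \to \psi]$ is the least $c$ with $[\phi] \oplus c \geq [\psi]$, exactly as the paper's definition of Brouwer implication demands. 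The induction showing that the canonical valuation interprets $\phi$ as $[\phi]$, so that $\phi$ holds in the Lindenbaum algebra iff $\mathrm{IPC} \vdash \phi$, then finishes the nontrivial inclusion; combined with the soundness proposition already in the paper, this yields the stated equality. The only ingredients you lean on without proof (the replacement/congruence theorem for $\mathrm{IPC}$ and provability of the distributive laws) are standard derived facts about any reasonable deduction system for $\mathrm{IPC}$, so invoking them is appropriate at this level of detail.
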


Thus, Brouwer algebras can be used to provide algebraic semantics for $\mathrm{IPC}$. Therefore, it would be nice if the computationally motivated Medvedev lattice has $\mathrm{IPC}$ as its theory, so that it would provide computational semantics for $\mathrm{IPC}$. Unfortunately the weak law of the excluded middle holds in the Medvedev lattice, as can be easily verified. However, as mentioned in the introduction we can still recover $\mathrm{IPC}$ by looking at principal factors of the Medvedev lattice.

\begin{prop}
Let $\BB$ be a Brouwer algebra and let $x,y \in \BB$. Then the interval $[x,y]_\BB = \{z \in \BB \mid x \leq z \leq y\}$ is a sublattice of $\BB$. Furthermore, it is a Brouwer algebra under the implication
\[u \to_{[x,y]_\BB} v = (u \to_\BB v) \oplus x.\]
\end{prop}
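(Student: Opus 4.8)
The plan is to split the statement into two parts: the sublattice claim, which is a routine order-theoretic verification, and the Brouwer-algebra claim, which amounts to checking the adjunction for the proposed operation $\to_{[x,y]_\BB}$. I would dispose of the lattice structure first and then spend the effort on the implication.

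First I would verify closure of $[x,y]_\BB$ under the operations $\oplus$ and $\otimes$ of $\BB$. If $x \leq u,v \leq y$, then $u \oplus v \geq u \geq x$, and since $y$ is an upper bound of both $u$ and $v$ we also have $u \oplus v \leq y$; dually $x \leq u \otimes v \leq v \leq y$. Hence the interval is a sublattice, with $x$ as least and $y$ as greatest element, and distributivity is inherited from $\BB$, so $[x,y]_\BB$ is a bounded distributive lattice. In particular the join $u \oplus c$ computed inside the sublattice agrees with the one computed in $\BB$, which is what lets me transfer the adjunction.

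Next I would check that $u \to_{[x,y]_\BB} v = (u \to_\BB v) \oplus x$ actually lands in $[x,y]_\BB$. The lower bound $x \leq (u \to_\BB v) \oplus x$ is immediate. For the upper bound the key observation is that $u \to_\BB v \leq y$: since $v \leq y \leq u \oplus y$, applying the defining adjunction of $\to_\BB$ with $c = y$ gives $y \geq u \to_\BB v$; combined with $x \leq y$ this yields $(u \to_\BB v) \oplus x \leq y$. Finally I would establish the adjunction inside the interval, namely that for $u,v,c \in [x,y]_\BB$ we have $u \oplus c \geq v$ iff $c \geq (u \to_\BB v) \oplus x$. For the forward direction, $u \oplus c \geq v$ gives $c \geq u \to_\BB v$ by the adjunction in $\BB$, and since $c \geq x$ because $c \in [x,y]_\BB$, we get $c \geq (u \to_\BB v) \oplus x$; conversely $c \geq (u \to_\BB v) \oplus x \geq u \to_\BB v$ gives $u \oplus c \geq v$ directly.

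I expect the only genuinely substantive step to be the well-definedness bound $u \to_\BB v \leq y$, since everything else is a direct transfer of the adjunction already available in $\BB$. The conceptual point making the whole argument go through is that the correction term $\oplus x$ is harmless for the adjunction precisely because every element $c$ of the interval already satisfies $c \geq x$, so adjoining $x$ never changes whether $c$ dominates $u \to_\BB v$.
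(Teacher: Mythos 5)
Your proof is correct. The paper states this proposition without proof, treating it as a routine fact, and your verification supplies exactly the standard argument being taken for granted: closure of the interval under $\oplus$ and $\otimes$, well-definedness of the implication via the bound $u \to_\BB v \leq y$ (obtained from $u \oplus y \geq v$ and the adjunction in $\BB$), and the transfer of the adjunction to the interval, where the correction term $\oplus\, x$ is absorbed precisely because every $c$ in the interval satisfies $c \geq x$.
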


\begin{prop}
Let $\BB$ be a Brouwer algebra and let $x \in \BB$. Then $\BB / \{z \in \BB \mid z \geq x\}$, which we will denote by $\BB / x$, is isomorphic as a bounded distributive lattice to $[0,x]_\BB$. In particular, $\BB / x$ is a Brouwer algebra.
\end{prop}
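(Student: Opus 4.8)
The plan is to exhibit an explicit isomorphism rather than to argue abstractly. Recall that $\BB / x$ is the quotient of $\BB$ by the congruence $\theta$ whose top class is the filter $\{z \in \BB \mid z \geq x\}$; concretely, $a \mathbin{\theta} b$ holds precisely when $a \otimes x = b \otimes x$. Indeed, $a \otimes x = x$ is equivalent to $a \geq x$, so this congruence collapses exactly the elements above $x$ into a single (top) class, as the quotient requires. I would therefore try to realise the desired isomorphism through the ``meet with $x$'' map.

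First I would define $\pi \colon \BB \to [0,x]_\BB$ by $\pi(a) = a \otimes x$. This lands in $[0,x]_\BB$ since $0 \leq a \otimes x \leq x$, and it is surjective because $\pi(z) = z \otimes x = z$ for every $z \in [0,x]_\BB$. Next I would verify that $\pi$ is a homomorphism of bounded distributive lattices: it preserves the bounds, as $\pi(0) = 0$ and $\pi(1) = x$ (the top of $[0,x]_\BB$); it preserves meets, since $(a \otimes b) \otimes x = (a \otimes x) \otimes (b \otimes x)$ by idempotency of $x$; and it preserves joins, since $(a \oplus b) \otimes x = (a \otimes x) \oplus (b \otimes x)$. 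This last identity is the only step requiring genuine work, and it is precisely an instance of the distributivity of $\otimes$ over $\oplus$ in the distributive lattice $\BB$.

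Finally, the kernel of $\pi$ is by construction exactly $\theta$, since $\pi(a) = \pi(b)$ says precisely that $a \otimes x = b \otimes x$. Applying the homomorphism theorem for lattices then yields an isomorphism $\BB / x \cong [0,x]_\BB$ of bounded distributive lattices, which is the main claim. For the concluding sentence I would argue that the Brouwer implication in any Brouwer algebra is determined by the lattice order alone, being the least $c$ with $a \oplus c \geq b$; hence an isomorphism of bounded distributive lattices between $\BB / x$ and the Brouwer algebra $[0,x]_\BB$ (which is a Brouwer algebra by the previous proposition) automatically transports the witnesses of this implication, so that $\BB / x$ is itself a Brouwer algebra.

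I do not expect a genuine obstacle here: the statement is in essence the standard fact that the quotient of a distributive lattice by a principal filter is isomorphic, via meeting with the generator, to the complementary interval. The only points demanding care are confirming that $\theta$ really is the congruence defining $\BB / x$ and invoking distributivity for join-preservation; both are routine verifications.
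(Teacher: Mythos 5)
Your proof is correct. The paper gives no proof of this proposition at all --- it is stated as a standard preliminary fact --- so there is nothing to compare against; your argument via the retraction $\pi(a) = a \otimes x$ is the canonical one, and each step (surjectivity, preservation of bounds, meets by idempotence, joins by distributivity, kernel equal to $\theta$, and then the homomorphism theorem) is sound. One small imprecision: a lattice congruence is not in general determined by its top class, so ``the congruence whose top class is the filter'' is not well-posed as stated (in a chain $0 < a < b < 1$ with $x = 1$, the congruence collapsing $\{a,b\}$ has the same top class $\{1\}$ as the identity congruence). The correct characterisation is that $\theta$ is the \emph{smallest} congruence collapsing $\{z \mid z \geq x\}$, and your concrete relation $a \otimes x = b \otimes x$ is indeed that congruence: any congruence identifying $x$ with $1$ must identify $a$ with $a \otimes x$ for every $a$, hence must contain $\theta$. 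This does not affect the rest of the proof. Your final paragraph also handles the one genuine subtlety correctly: the congruence $\theta$ need not respect $\to_\BB$ (for instance, in the chain $0 < m < 1$ with $x = m$ one has $m \mathrel{\theta} 1$ but $m \to_\BB 1 = 1$ while $1 \to_\BB 1 = 0$, and $1 \mathrel{\not\theta\,} 0$), so $\BB / x$ is emphatically \emph{not} a quotient of Brouwer algebras; the implication on $\BB / x$ must be imported through the lattice isomorphism with $[0,x]_\BB$, using that being a Brouwer algebra is a property of the underlying bounded lattice, exactly as you argue.
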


Thus, looking at a principal factor $\M / \A$ is the same as restricting to $[\omega^\omega,\A]_{\M}$. This means that, when looking at the theory of this factor, we interpret $\bot$ as $\A$ instead of as $\emptyset$. So one might interpret looking at such a factor by replacing the problem $\emptyset$, which is `too hard', by an easier problem $\A$.

Finally, we mention one easy lemma which we will use in this paper.

\begin{lem}\label{surj-th}
Let $\BB, \BC$ be Brouwer algebras and let $\alpha: \BB \to \BC$ be a surjective homomorphism. Then $\mathrm{Th}(\BB) \subseteq \mathrm{Th}(\BC)$.
\end{lem}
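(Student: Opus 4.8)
The plan is to transport valuations along $\alpha$, using that a homomorphism of Brouwer algebras preserves all the operations that interpret propositional connectives, namely $\oplus$, $\otimes$, $\to$, and the two constants $0$ and $1$. Contrapositively, any witness to the failure of a formula in $\BC$ can then be pulled back to a witness in $\BB$, so validity in $\BB$ forces validity in $\BC$.

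Concretely, fix a formula $\phi(x_1,\dots,x_m) \in \Th(\BB)$ and let $\psi$ be its translation into the language of Brouwer algebras, as in the definition of $\Th$. To show $\phi \in \Th(\BC)$, I would take an arbitrary valuation $c_1,\dots,c_m \in \BC$ and verify that $\psi(c_1,\dots,c_m) = 0$. By surjectivity of $\alpha$, choose $b_1,\dots,b_m \in \BB$ with $\alpha(b_i) = c_i$ for each $i$. The central step is then a straightforward induction on the structure of the term $\psi$ establishing
\[\alpha(\psi(b_1,\dots,b_m)) = \psi(\alpha(b_1),\dots,\alpha(b_m)) = \psi(c_1,\dots,c_m).\]
The base cases are the variables $x_i$, which are immediate from $\alpha(b_i) = c_i$, and the constant $1$, which holds because $\alpha(1) = 1$; the inductive cases for $\oplus$, $\otimes$ and $\to$ follow at once from $\alpha$ being a homomorphism. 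Since $\phi \in \Th(\BB)$ we have $\psi(b_1,\dots,b_m) = 0$, and applying $\alpha$ together with $\alpha(0) = 0$ gives $\psi(c_1,\dots,c_m) = 0$. As the valuation was arbitrary, $\phi \in \Th(\BC)$.

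The only point requiring genuine care is that $\alpha$ must preserve the implication $\to$ and the distinguished element $0$ interpreting logical truth: these are precisely the clauses that make the homomorphism hypothesis do its work, and without preservation of $\to$ the inductive step would collapse. Surjectivity is used for a single purpose, namely to guarantee that every valuation over $\BC$ is the $\alpha$-image of some valuation over $\BB$, so that membership of $\phi$ in $\Th(\BB)$ can actually be invoked.
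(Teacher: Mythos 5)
Your proof is correct and is essentially the paper's own argument: both lift an arbitrary valuation from $\BC$ to $\BB$ via surjectivity and use the fact that a homomorphism commutes with the evaluation of the term $\psi$ (including preservation of $0$, $1$ and $\to$). The paper merely phrases it contrapositively (a refuting valuation in $\BC$ pulls back to one in $\BB$), which is the same computation read in the other direction.
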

\begin{proof}
Let $\phi(x_1,\dots,x_n) \not\in \mathrm{Th}(\BC)$. Fix $c_1,\dots,c_n \in \BC$ such that $\phi(c_1,\dots,c_n) \not= 0$. Fix $b_1,\dots,b_n \in \BB$ such that $\gamma(b_i) = c_i$. Then
\[\alpha(\phi(b_1,\dots,b_n)) = \phi(\alpha(b_1),\dots,\alpha(b_n)) = \phi(c_1,\dots,c_n) \not=0\]
because $\alpha$ is a homomorphism. Thus $\phi(b_1,\dots,b_n) \not= 0$ and therefore 
$\phi \not\in \mathrm{Th}(\BB)$.
\end{proof}

\section{Upper implicative semilattice embeddings of $\mathcal{P}(I)$ into $\M$}

As a first step, we will describe a method to embed Boolean algebras of the form $\mathcal{P}(I)$, ordered under reverse inclusion $\supseteq$, into the Medvedev lattice $\M$ as an upper implicative semilattice (i.e.\ preserving $\oplus$, $\to$, $0$ and $1$). It should be noted that we will only need this for finite $I$, and Skvortsova \cite[Lemma 7]{skvortsova-1988-en} already showed that such embeddings exist. However, Skvortsova used Lachlan's result \cite{lachlan-1968} that every countable distributive lattice can be order-theoretically embedded as an initial segment of the Turing degrees. Because we want natural factors of the Medvedev lattice, we want to avoid the use of this theorem. Our main result of this section will show that there are various natural embeddings of $\mathcal{P}(I)$ into $\M$.
These embeddings are induced by so-called \emph{strong upwards antichains}, where the notion of a strong upwards antichain is the order-dual of the notion of an antichain normally used in forcing.

\begin{defi}
Let $\A \subseteq \omega^\omega$ be downwards closed under Turing reducibility and let $(f_i)_{i \in I} \in \A^I$. Then we say that $(f_i)_{i \in I}$ is a \emph{strong upwards antichain in $\A$} if for all $i \neq j$ we have that $f_i \oplus f_j \not\in \A$.
\end{defi}

Henceforth we will mean by \emph{antichain} a \emph{strong upwards antichain}.

\begin{ex}
We give some examples of countably infinite antichains.
\begin{enumerate}[{\rm (i)}]
\item Take $\A$ to be the computable functions together with the functions of minimal degree, and $f_0,f_1 \dots$ any sequence of functions of distinct minimal Turing degree.
\item Let $f_0,f_1,\dots$ be pairwise incomparable under Turing reducibility and take $\A$ to be the lower cone of $\{f_i \mid i \in \omega\}$.
\end{enumerate}
\end{ex}

The next theorem shows that each antichain induces an upper implicative semilattice embedding of $\mathcal{P}(I)$ in a natural way.

\begin{thm}\label{thm-pow-embed}
Let $\A \subseteq \omega^\omega$ be downwards closed under Turing reducibility, let $(f_i)_{i \in I}$ be an antichain in $\A$, and let $\B = \overline{\A} \cup C\left(\{f_i \mid i \in I\}\right)$.
Then the map $\alpha$ given by $\alpha(X) = \overline{\A} \cup C\left(\{f_i \mid i \in X\}\right)$
is an upper implicative semilattice embedding of $(\mathcal{P}(I),\supseteq)$ into $\Big[\B,\overline{\A}\Big]_\M$.
\end{thm}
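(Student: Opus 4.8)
The plan is to check, in turn, that $\alpha$ lands in the interval, preserves $0$ and $1$, is an order embedding, and preserves $\oplus$ and $\to$. First note that each $\alpha(X) = \overline{\A} \cup C(\{f_i \mid i \in X\})$ is upward closed and satisfies $\overline{\A} \subseteq \alpha(X) \subseteq \B$ as sets, so $\B \leq_\M \alpha(X) \leq_\M \overline{\A}$ via the identity functional and $\alpha(X) \in [\B,\overline{\A}]_\M$. Since $(\mathcal{P}(I),\supseteq)$ is ordered by reverse inclusion, its bottom is $I$, its top is $\emptyset$, its join is $\cap$, and its implication is $X \to Y = \overline{X} \cup Y$ (complement in $I$); the identities $\alpha(I) = \B$ and $\alpha(\emptyset) = \overline{\A}$ give preservation of $0$ and $1$ immediately. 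Order-preservation is also easy: if $X \supseteq Y$ then $\alpha(X) \supseteq \alpha(Y)$ as sets, whence $\alpha(X) \leq_\M \alpha(Y)$ by the identity.

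The first place the hypotheses really enter is order-reflection. I would argue the contrapositive: given $k \in Y \setminus X$ and a functional $\Phi$ with $\Phi(\alpha(Y)) \subseteq \alpha(X)$, apply $\Phi$ to $f_k \in \alpha(Y)$. As $f_k \in \A$ and $\A$ is downward closed, $\Phi(f_k) \in \A$, so $\Phi(f_k) \notin \overline{\A}$ and hence $\Phi(f_k) \geq_T f_j$ for some $j \in X$; then $f_j \leq_T \Phi(f_k) \leq_T f_k$ with $j \neq k$, so $f_j \oplus f_k \equiv_T f_k \in \A$, contradicting the strong-antichain condition. This establishes that $\alpha$ reflects the order, and in particular is injective. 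The same two features---downward closure of $\A$ and the antichain property---drive $\oplus$-preservation, $\alpha(X \cap Y) \equiv_\M \alpha(X) \oplus \alpha(Y)$: the inequality $\alpha(X) \oplus \alpha(Y) \leq_\M \alpha(X \cap Y)$ is formal, since $X \cap Y$ is the join in $(\mathcal{P}(I),\supseteq)$ and $\alpha$ is monotone, while for the reverse I would show the set inclusion $\alpha(X) \oplus \alpha(Y) \subseteq \alpha(X \cap Y)$ so that the identity suffices: for $f \oplus g$ with $f \geq_T f_i$, $g \geq_T f_j$, either $i = j \in X \cap Y$ or $i \neq j$ and $f \oplus g \geq_T f_i \oplus f_j \notin \A$, and if instead $f$ or $g$ lies in $\overline{\A}$ then so does $f \oplus g$.

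The substantial step is $\to$-preservation, namely $\alpha(\overline{X} \cup Y) \equiv_\M (\alpha(X) \to \alpha(Y)) \oplus \B$, using the interval-implication formula $u \to_{[\B,\overline{\A}]} v = (u \to v) \oplus \B$. The direction $\alpha(\overline{X} \cup Y) \geq_\M \alpha(X) \to_{[\B,\overline{\A}]} \alpha(Y)$ is formal: by the adjunction inside the interval it suffices that $\alpha(X) \oplus \alpha(\overline{X} \cup Y) \geq_\M \alpha(Y)$, and this follows from the $\oplus$- and order-preservation already in hand, since $\alpha(X) \oplus \alpha(\overline{X} \cup Y) \equiv_\M \alpha(X \cap (\overline{X} \cup Y)) = \alpha(X \cap Y) \geq_\M \alpha(Y)$. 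For the reverse I must uniformly compute, from an element $(n \conc h) \oplus b$ with $\Phi_n(h \oplus p) \in \alpha(Y)$ for all $p \in \alpha(X)$ and with $b \in \B$, an element of $\alpha(\overline{X} \cup Y)$. The guiding case analysis is: if $b \in \overline{\A}$, or if $b \geq_T f_i$ for some $i \notin X$, then $b$ itself already lies in $\alpha(\overline{X} \cup Y)$; and if every $f_i$ below $b$ has index in $X$ then $b \in \alpha(X)$, so $\Phi_n(h \oplus b)$ converges and lands in $\alpha(Y) \subseteq \alpha(\overline{X} \cup Y)$.

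I expect \textbf{the main obstacle} to be precisely that $\Phi_n(h \oplus b)$ need not converge in the first two cases, so one cannot simply output the join $b \oplus \Phi_n(h \oplus b)$ and keep it total. My fix is the standard totalising encoding: output a single total function $e$ which computes $b$ on one block of coordinates and, on another, records the stage-bounded approximations of $\Phi_n(h \oplus b)$ together with a convergence flag. Such an $e$ is total for every admissible input, always satisfies $e \geq_T b$, and satisfies $e \geq_T \Phi_n(h \oplus b)$ whenever that computation is total---exactly the case $b \in \alpha(X)$ in which this is needed. The case analysis above then yields $e \in \alpha(\overline{X} \cup Y)$ in every case, giving $\alpha(\overline{X} \cup Y) \leq_\M (\alpha(X) \to \alpha(Y)) \oplus \B$ and, together with the formal direction, completing the proof that $\alpha$ is an upper implicative semilattice embedding.
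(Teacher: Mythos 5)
Your proof is correct, and it arrives at the same two key equivalences ($\alpha(X)\oplus\alpha(Y)\equiv_\M\alpha(X\cap Y)$ and $\alpha(X)\to\alpha(Y)\equiv_\M\alpha((I\setminus X)\cup Y)$) by a more hands-on route than the paper. The paper appeals to Skvortsova's Lemma 5: for Turing-upward-closed mass problems $\mathcal{U},\mathcal{V}$, the Medvedev join is just intersection and the Medvedev implication $\mathcal{U}\to_\M\mathcal{V}$ is Medvedev-equivalent to the pointwise implication $\{g \mid \forall h \in \mathcal{U}\,(g\oplus h\in\mathcal{V})\}$; after that, both preservation statements reduce to exactly the degree-theoretic antichain arguments you also give. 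You instead unwind the definition of $\to_\M$ with explicit functionals, getting the formal inequalities from monotonicity and the adjunction (a tidy touch the paper leaves implicit) and the substantive ones from uniform reductions. The one place this costs you is the totalising gadget, and there your proof is correct but heavier than necessary: since $\alpha((I\setminus X)\cup Y)$ is upward closed under $\geq_T$ and $\Phi_n(h\oplus b)\leq_T h\oplus b$ whenever it converges, you can simply output $h\oplus b$ itself --- in your third case it computes $\Phi_n(h\oplus b)\in\alpha(Y)$ and so lies in $\alpha(Y)$ by upward closure, while in the first two cases it lies above $b$, which already suffices. This observation, that a reduction into an upward-closed target never needs to be made total because the input computes the output, is precisely what makes the implication half of Skvortsova's lemma true, so in effect your gadget re-proves that lemma with extra machinery. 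On the other hand, you explicitly verify order-reflection (hence injectivity) via $\Phi(f_k)$, using downward closure of $\A$ and the strong-antichain condition; the paper's proof does not spell this point out, and your argument for it is correct.
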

\begin{proof}
For ease of notation, if $X \subseteq I$ we will denote by $C(X)$ the set $C\left(\{f_i \mid i \in X\}\right)$.

We have:
\[\alpha(X \cap Y) = \overline{\A} \cup C(X \cap Y).\]
On the other hand, because $\alpha(X)$ and $\alpha(Y)$ are upwards closed their join is just intersection (see Skvortsova \cite[Lemma 5]{skvortsova-1988-en}),
and therefore:
\[\alpha(X) \oplus \alpha(Y) \equiv_\M \overline{\A} \cup (C(X) \cap C(Y)).\]
Clearly, $\alpha(X \cap Y) \subseteq \overline{\A} \cup (C(X) \cap C(Y))$. Conversely, let $g \in \overline{\A} \cup (C(X) \cap C(Y))$. If $g \not\in \A$ then clearly $g \in \alpha(X \cap Y)$. So, assume $g \in \A$. Let $i \in X, j \in Y$ be such that $g \geq_T f_i$ and $g \geq_T f_j$. Then $f_i \oplus f_j \leq_T g \in \A$ so $f_i \oplus f_j \in \A$. Since $(f_i)_{i \in I}$ is an antichain in $\A$ this can only be the case if $i = j$, so we see that $g \in \alpha(X \cap Y)$.

We also have, again by \cite[Lemma 5]{skvortsova-1988-en}:
\begin{align*}
\alpha(X) &\to_{\big[\B,\overline{\A}\big]_\M} \alpha(Y)\\
&\equiv_\M \B \oplus \{g \mid \forall h \in \alpha(X) (g \oplus h \in \alpha(Y))\}\\
&\equiv_\M \{g \in \B \mid \forall i \in X \forall h \geq_T f_i \exists j \in Y (g \oplus h \in \A \to g \oplus h \geq_T f_j)\}\\
&= \overline{\A} \cup \{g \in C\left(\{f_i \mid i \in I\}\right)\notag\\
&\quad\quad\quad\quad\quad\mid \forall i \in X \forall h \geq_T f_i \exists j \in Y (g \oplus h \in \A \to g \oplus h \geq_T f_j)\}.
\end{align*}
Fix any $g \in \A \cap C\left(\{f_i \mid i \in I\}\right)$ such that
\begin{equation}\label{eqn1}
\forall i \in X \forall h \geq_T f_i \exists j \in Y (g \oplus h \in \A \to g \oplus h \geq_T f_j).
\end{equation}
Then we know that there is some $k \in I$ such that $g \geq_T f_k$. We claim: $k \not\in X$ or $k \in Y$.

Namely, assume $k \in X$ and $k \not\in Y$. Then, by \eqref{eqn1} (with $h=g$) there exists some $j \in Y$ such that $g \geq_T f_j$, and since $k \not\in Y$ we know that $j \neq k$. But then $f_k \oplus f_j \leq_T g \in \A$ so $f_k \oplus f_j \in \A$, a contradiction with the fact that $(f_i)_{i \in I}$ is an antichain in $\A$.

Conversely, if $g \in \A$ is such that $g \geq_T f_k$ for some $k \not\in X$ or some $k \in Y$, then \eqref{eqn1} holds: namely, if $k \not\in X$ then we have for all $i \in X$ that $g \oplus f_i \not\in \A$ because $(f_i)_{i \in I}$ is an antichain in $\A$, while if $k \in Y$ we have that $g \oplus f_i \geq_T f_k$.

So, from this we see:
\begin{align*}
\alpha(X) \to_{\big[\B,\overline{\A}\big]_\M} \alpha(Y)
&\equiv_\M \overline{\A} \cup C((I \setminus X) \cup Y)\\
&= \alpha((I \setminus X) \cup Y)\\
&= \alpha(X \to_{\mathcal{P}(I)} Y).\qedhere
\end{align*}
\end{proof}

\section{From embeddings of $\mathcal{P}(\omega)$ to factors capturing IPC}\label{sec-to-factors}

In this section we will show how to construct a more natural factor of the Medvedev lattice with IPC as its theory; that is, we will prove Theorem \ref{thm-main}. For this proof we will use several ideas from Skvortsova's construction of a factor of the Medvedev lattice which has IPC as its theory, given in Skvortsova \cite{skvortsova-1988-en}. We combine these ideas with our own to get to the factor in Theorem \ref{thm-main}. First, let us discuss canonical subsets of a Brouwer algebra.

\begin{defi}{\rm (\cite[p.\ 134]{skvortsova-1988-en})}
Let $\BB$ be a Brouwer algebra and let $\BC \subseteq \BB$. Then we call $\BC$ \emph{canonical} if:
\begin{enumerate}[\rm (i)]
\item \label{canon-1}All elements in $\BC$ are meet-irreducible,
\item \label{canon-2}$\BC$ is closed under joins and implications (i.e.\ it is a sub-upper implicative semilattice),
\item \label{canon-3}For all $a \in \BC$ and $b,c \in \B$ we have $a \to (b \otimes c) = (a \to b) \otimes (a \to c)$.
\end{enumerate}
\end{defi}

\begin{prop}{\rm (\cite[Corollary to Lemma 6]{skvortsova-1988-en})}\label{prop-muchnik-can}
The set of Muchnik degrees is a canonical subset of $\M$.
\end{prop}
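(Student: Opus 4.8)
The plan is to verify the three clauses of the definition of \emph{canonical} directly, representing each Muchnik degree by an upwards closed mass problem $\A$. The single tool driving every clause is a \emph{prepend-the-use} argument: if $\Phi(\rho)(0)\downarrow$ via a computation with use contained in $\rho\restriction k$, and $\tau=\rho\restriction k$, then $\Phi(\rho')(0)=\Phi(\rho)(0)$ for \emph{every} $\rho'\supseteq\tau$; and whenever $g\in\A$ we have $\tau\conc g\geq_T g$, so $\tau\conc g\in\A$ by upwards closure. Thus we may force the behaviour of a functional on a fixed output coordinate while staying inside $\A$. Throughout I represent $\A$ by an upwards closed set and, for clause (i), recall that a Muchnik degree is one possessing such a representative.

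For clause (i) (meet-irreducibility with respect to $\otimes$), suppose the Muchnik degree of $\A$ equals $b\otimes c$, and fix a functional $\Phi$ witnessing $\A\geq_\M \B\otimes\C$, so $\Phi(\A)\subseteq\{0\conc h\mid h\in\B\}\cup\{1\conc h\mid h\in\C\}$. I would split into two cases. If some $f^\ast\in\A$ has $\Phi(f^\ast)(0)=1$, take $\tau=f^\ast\restriction k$ containing the use; then $f\mapsto\Phi(\tau\conc f)$ with its first value deleted maps $\A$ into $\C$, so $\A\geq_\M\C$. Otherwise every $f\in\A$ has $\Phi(f)(0)=0$, and deleting the first value gives $\A\geq_\M\B$. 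Since $\B\otimes\C\leq_\M\B$ and $\B\otimes\C\leq_\M\C$ already give $\A\leq_\M\B$ and $\A\leq_\M\C$, in either case we obtain $\A\equiv_\M\B$ or $\A\equiv_\M\C$, as required.

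For clause (ii) I would exhibit upwards closed representatives of the join and implication. For upwards closed $\A,\B$ the identity functional in one direction and $h\mapsto h\oplus h$ in the other show $\A\oplus\B\equiv_\M\A\cap\B$ (this is Skvortsova's Lemma 5), and $\A\cap\B$ is upwards closed. For implication I would prove $\A\to\B\equiv_\M\{h\mid\forall g\in\A\,(h\oplus g\in\B)\}$, whose right-hand side is upwards closed because $\B$ is: given $n\conc f\in\A\to\B$, the map $n\conc f\mapsto f$ works since $f\oplus g\geq_T\Phi_n(f\oplus g)\in\B$; conversely $h\mapsto n_0\conc h$, with $n_0$ an index for the identity functional, works since $h\oplus g\in\B$ for all $g\in\A$. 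Hence $\oplus$ and $\to$ of Muchnik degrees are again Muchnik, and $0$ and $1$ are visibly Muchnik.

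Clause (iii), for upwards closed $\A$ and arbitrary $\B,\C$, is where I expect the real difficulty. One inequality, $\A\to(\B\otimes\C)\geq_\M(\A\to\B)\otimes(\A\to\C)$, is the general monotonicity of $\to$ in its second argument, valid in any Brouwer algebra. The substantive inequality $(\A\to\B)\otimes(\A\to\C)\leq_\M\A\to(\B\otimes\C)$ requires producing, uniformly from an oracle $p=n\conc f\in\A\to(\B\otimes\C)$, a single total output committing to one side of the $\otimes$; the obstacle is that the branch taken by $\Phi_n(f\oplus g)(0)$ may vary with $g\in\A$, so the side cannot be read off non-uniformly. I would resolve this by dovetailing over finite strings $\sigma$ until the first $\sigma$ with $\Phi_n^{f\oplus\sigma}(0)\downarrow$ is found, say with value $\beta$; this search halts because some $g\in\A$ makes the computation converge, and its use yields such a $\sigma$. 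I then output $\beta\conc(m\conc f)$, where $m$ is an index for $g\mapsto\Phi_n(f\oplus(\sigma\conc g))$ with first value deleted. By the prepend-the-use principle, for every $g\in\A$ we have $\sigma\conc g\in\A$ and $\Phi_n(f\oplus(\sigma\conc g))(0)=\beta$, so the output lands in $\A\to\B$ when $\beta=0$ and in $\A\to\C$ when $\beta=1$; either way it lies in $(\A\to\B)\otimes(\A\to\C)$. (The degenerate case $\A=\emptyset$ makes all three terms equal to $0$ and is handled separately.) The forcing-string construction combined with the commit-by-dovetailing step is the heart of the argument.
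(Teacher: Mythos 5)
Your proof is correct. Note that the paper itself contains no argument for this proposition: it is quoted as the Corollary to Lemma 6 of Skvortsova \cite{skvortsova-1988-en}, so there is no in-paper proof to compare against; your direct verification is, in substance, exactly the argument behind Skvortsova's lemma, namely representing a Muchnik degree by an upwards closed problem and exploiting finite use together with the observation that $\tau \conc g \geq_T g$, so that prepending strings never leaves the problem. All three clauses check out: the first-value case split for meet-irreducibility in clause (i); the identities $\A \oplus \B \equiv_\M \A \cap \B$ and $\A \to \B \equiv_\M \{h \mid \forall g \in \A\, (h \oplus g \in \B)\}$ for closure in clause (ii); and the dovetail-and-commit construction, with the degenerate case $\A = \emptyset$ set aside, for clause (iii). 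One slip should be repaired: you state the ``easy'' half of (iii) as $\A \to (\B \otimes \C) \geq_\M (\A \to \B) \otimes (\A \to \C)$, but since $\geq_\M$ is just the reverse of $\leq_\M$, that is literally the same inequality as the ``substantive'' half you then prove by the dovetailing construction. What monotonicity of $\to$ in its second argument actually yields is the other orientation, $\A \to (\B \otimes \C) \leq_\M (\A \to \B) \otimes (\A \to \C)$, because $\B \otimes \C$ lies below both $\B$ and $\C$ and $\otimes$ is the meet. With the sign corrected, the two halves combine to give the required equality and nothing else needs to change. It would also be worth making explicit that the value $\beta$ found by your search is automatically $0$ or $1$: for any $g \in \A$ we have $\sigma \conc g \in \A$, so $\Phi_n(f \oplus (\sigma \conc g))$ lies in $\B \otimes \C$ and its first value, which equals $\beta$ by the use argument, is a valid tag.
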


\begin{cor}\label{cor-range-canonical}
The range of $\alpha$ from Theorem \ref{thm-pow-embed} is canonical in $[\alpha(I),\alpha(\emptyset)]_\M$.
\end{cor}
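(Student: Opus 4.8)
The plan is to verify the three defining conditions of canonicity one at a time, exploiting throughout the fact that every element in the range of $\alpha$ is upwards closed under Turing reducibility and hence a Muchnik mass problem. Note first that $\alpha(I) = \B$ and $\alpha(\emptyset) = \overline{\A}$, so the ambient algebra is $[\alpha(I),\alpha(\emptyset)]_\M = [\B,\overline{\A}]_\M$. Since this interval is a sublattice of $\M$, its join $\oplus$ and meet $\otimes$ agree with those of $\M$, and only the implication is altered, via $u \to_{[\B,\overline{\A}]_\M} v = (u \to_\M v) \oplus \B$. The whole argument is then a matter of combining Proposition \ref{prop-muchnik-can} with this formula.

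For condition (i) I would argue that meet-irreducibility transfers for free. By Proposition \ref{prop-muchnik-can} each Muchnik degree, and in particular each $\alpha(X)$, is meet-irreducible in $\M$; since the quantifier in the definition of meet-irreducibility ranges only over a subset once we pass to the sublattice $[\B,\overline{\A}]_\M$, and since $\otimes$ is computed identically there, each $\alpha(X)$ remains meet-irreducible in the interval. Condition (ii) is then exactly the content of Theorem \ref{thm-pow-embed}: the identities $\alpha(X)\oplus\alpha(Y) \equiv_\M \alpha(X\cap Y)$ and $\alpha(X)\to_{[\B,\overline{\A}]_\M}\alpha(Y)\equiv_\M\alpha((I\setminus X)\cup Y)$ proved there show that the range of $\alpha$ is closed under the join and the interval implication, i.e.\ that it is a sub-upper-implicative-semilattice.

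The only point requiring a genuine computation is condition (iii), and this is where I expect the crux to lie, although it ultimately reduces to distributivity. Fixing $a = \alpha(X)$ and $b,c \in [\B,\overline{\A}]_\M$, I would use the interval implication formula, then condition (iii) of canonicity for the Muchnik degrees in $\M$ (Proposition \ref{prop-muchnik-can}), and finally the distributivity of $\M$, to obtain
\begin{align*}
a \to_{[\B,\overline{\A}]_\M} (b \otimes c)
&= (a \to_\M (b \otimes c)) \oplus \B\\
&= \big((a \to_\M b) \otimes (a \to_\M c)\big) \oplus \B\\
&= \big((a \to_\M b) \oplus \B\big) \otimes \big((a \to_\M c) \oplus \B\big)\\
&= \big(a \to_{[\B,\overline{\A}]_\M} b\big) \otimes \big(a \to_{[\B,\overline{\A}]_\M} c\big),
\end{align*}
which is precisely the required identity.

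The third equality is the one place the argument uses something substantive about the ambient lattice, namely that $\oplus$ distributes over $\otimes$ in $\M$; the first and last equalities are just the interval implication formula, and the second is canonicity of the Muchnik degrees applied in $\M$ (legitimate since $b,c \in [\B,\overline{\A}]_\M \subseteq \M$ and $a$ is a Muchnik degree). Thus the main obstacle is not conceptual but merely the bookkeeping needed to move the extra summand $\B$ past the meet, and this is handled by distributivity. Having verified (i), (ii) and (iii), the range of $\alpha$ is canonical in $[\alpha(I),\alpha(\emptyset)]_\M$, as claimed.
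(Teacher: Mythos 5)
Your proposal is correct and follows essentially the same route as the paper: condition \eqref{canon-1} via Proposition \ref{prop-muchnik-can} (meet-irreducibility transferring to the sublattice), condition \eqref{canon-2} from $\alpha$ being an upper implicative semilattice embedding, and condition \eqref{canon-3} by the chain of identities combining the interval implication formula, Proposition \ref{prop-muchnik-can} applied in $\M$, and distributivity of $\oplus$ over $\otimes$. The only difference is presentational: the paper compresses your last two equalities into a single step, whereas you make the use of distributivity explicit.
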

\begin{proof}
The range of $\alpha$ consists of Muchnik degrees, so \eqref{canon-1} holds by Proposition \ref{prop-muchnik-can}. Furthermore, $\alpha$ is an upper implicative semilattice embedding, and therefore \eqref{canon-2} also holds. Finally, if $\C_0,\C_1 \in [\alpha(I),\alpha(\emptyset)]_\M$ and $X \subseteq I$, then we see, using Proposition \ref{prop-muchnik-can}:
\begin{align*}
\alpha(X) &\to_{[\alpha(I),\alpha(\emptyset)]_\M} (\C_0 \otimes \C_1)\\
&= (\alpha(X) \to_\M (\C_0 \otimes \C_1)) \oplus \alpha(I)\\
&\equiv_\M ((\alpha(X) \to_\M \C_0) \otimes (\alpha(X) \to_\M \C_1)) \oplus \alpha(I)\\
&\equiv_\M (\alpha(X) \to_{[\alpha(I),\alpha(\emptyset)]_\M} \C_0) \otimes (\alpha(X) \to_{[\alpha(I),\alpha(\emptyset)]_\M} \C_1).\qedhere
\end{align*}
\end{proof}

\begin{prop}{\rm (\cite[Lemma 2]{skvortsova-1988-en})}\label{prop-free-canonical}
If $\BC$ is a canonical set in a Brouwer algebra $\BB$, then the smallest sub-Brouwer algebra of $\BB$ containing $\BC$ is $\{a_1 \otimes \dots \otimes a_n \mid a_i \in \BC\}$, and it is isomorphic to the free Brouwer algebra over the upper implicative semilattice $\BC$ through an isomorphism fixing $\BC$.
\end{prop}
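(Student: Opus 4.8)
The plan is to verify directly that $\D := \{a_1 \otimes \dots \otimes a_n \mid a_i \in \BC\}$ is closed under all the Brouwer algebra operations, and then to identify it with the free Brouwer algebra by computing the partial order on $\D$. First I would check that $\D$ is a sub-Brouwer algebra. Closure under $\otimes$ is immediate from the definition, and closure under $\oplus$ follows from distributivity together with closure of $\BC$ under $\oplus$ (condition (ii)), since $(\bigotimes_i a_i) \oplus (\bigotimes_j b_j) = \bigotimes_{i,j}(a_i \oplus b_j)$ and each $a_i \oplus b_j \in \BC$. The bottom $0 = a \to a$ lies in $\BC$ for any $a \in \BC$, and the top $1$ (corresponding to $\bot$) is the top of the semilattice $\BC$, so $\D$ contains both bounds. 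Minimality is then clear: any sub-Brouwer algebra containing $\BC$ is closed under $\otimes$ and hence contains every finite meet $\bigotimes_i a_i$.

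The first genuine obstacle is closure under $\to$. Here I would use two identities. The first is the general Brouwer algebra identity $(\bigotimes_i a_i) \to c = \bigoplus_i (a_i \to c)$, which is the order-dual of the Heyting identity $(a \vee b) \to c = (a \to c) \wedge (b \to c)$ and follows from the residuation law $a \oplus c \geq b \iff c \geq a \to b$. The second is condition (iii), iterated, which gives $a_i \to (\bigotimes_j b_j) = \bigotimes_j (a_i \to b_j)$ whenever $a_i \in \BC$. Combining these yields
\[\left(\bigotimes_i a_i\right) \to \left(\bigotimes_j b_j\right) = \bigoplus_i \bigotimes_j (a_i \to b_j),\]
and since each $a_i \to b_j \in \BC$ by condition (ii) while $\D$ is closed under $\oplus$, the right-hand side lies in $\D$. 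This establishes the first assertion.

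For the second assertion, the universal property of the free Brouwer algebra $F(\BC)$ over the upper implicative semilattice $\BC$ supplies a surjective Brouwer algebra homomorphism $\eta \colon F(\BC) \to \D$ extending the inclusion $\BC \hookrightarrow \D$. The main obstacle is injectivity of $\eta$, which I would reduce to a computation of the order on $\D$. Since the elements of $\BC$ are meet-irreducible (condition (i)) and $\BB$ is distributive, they are in fact meet-prime, so $\bigotimes_i a_i \leq b_j$ forces $a_i \leq b_j$ for some $i$; hence
\[\bigotimes_i a_i \leq \bigotimes_j b_j \iff \forall j\, \exists i\ (a_i \leq b_j).\]
This is exactly the order of the free algebra $F(\BC)$ on finite meets of semilattice elements, so $\eta$ is an order isomorphism and therefore a bounded lattice isomorphism.

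Finally, since the implication $\to$ is uniquely determined by $\oplus$ and the lattice order through residuation, any bounded lattice isomorphism between Brouwer algebras automatically preserves $\to$; as $\eta$ also fixes $\BC$ by construction, it is the desired isomorphism of Brouwer algebras. The two points requiring real care are the antecedent-meet identity used for closure under $\to$, and the passage from meet-irreducibility to meet-primeness (valid by distributivity) that pins down the order on $\D$ and hence the injectivity of $\eta$.
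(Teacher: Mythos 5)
The paper itself gives no proof of this proposition: it is quoted from Skvortsova \cite[Lemma 2]{skvortsova-1988-en}, so your argument is being judged on its own merits rather than against anything in the text. In substance your proof is correct, and its two computational pillars are exactly the real content of the lemma: first, closure of $\D = \{a_1 \otimes \dots \otimes a_n \mid a_i \in \BC\}$ under implication, obtained by combining the dual Heyting identity $\left(\bigotimes_i a_i\right) \to c = \bigoplus_i (a_i \to c)$ (valid in every Brouwer algebra by residuation) with condition (iii) iterated, giving $\left(\bigotimes_i a_i\right) \to \left(\bigotimes_j b_j\right) = \bigoplus_i \bigotimes_j (a_i \to b_j)$, whose constituents lie in $\BC$ by condition (ii); and second, the determination of the order on $\D$ from the fact that meet-irreducible elements of a distributive lattice are meet-prime, yielding $\bigotimes_i a_i \leq \bigotimes_j b_j$ if and only if for every $j$ there is an $i$ with $a_i \leq b_j$. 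Your closing remark, that an order isomorphism between Brouwer algebras automatically preserves $\to$ because implication is order-definable by residuation, is also correct and is precisely what lets you avoid checking $\to$-preservation of $\eta$ by hand.

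One point deserves more care than you give it: the invocation of ``the universal property of the free Brouwer algebra $F(\BC)$''. If $F(\BC)$ meant the free object in the unrestricted universal-algebra sense --- a Brouwer algebra such that \emph{every} upper implicative semilattice morphism from $\BC$ into \emph{any} Brouwer algebra extends to a Brouwer algebra homomorphism --- then your structural description of $F(\BC)$ as ``finite meets of semilattice elements with the antichain order'' would be false: for an arbitrary such morphism $f$ the images $f(a)$ need not satisfy condition (iii), so the displayed implication formula fails for them, the extension cannot in general be defined on finite meets alone, and the unrestrictedly free algebra is strictly larger (indeed the proposition itself would be false under this reading). The notion intended here, and the only one under which the statement is true, is the free distributive lattice over the join-semilattice reduct of $\BC$ (finite formal meets, i.e.\ antichains, with the order you wrote down), equipped with the implication this order forces, which one checks extends $\to_\BC$. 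Under that reading the universal property hands you $\eta$ only as a bounded \emph{lattice} homomorphism, not yet a Brouwer homomorphism; fortunately that is all your argument ever uses, since surjectivity needs only closure under $\otimes$, injectivity is your order computation, and preservation of $\to$ is recovered at the end by residuation. So the proof goes through, but you should say explicitly which ``free'' object you mean; as written the two readings are conflated, and under one of them your description of $F(\BC)$ is wrong.
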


In particular, we see:

\begin{cor}\label{cor-emb-ext}
If we let $\alpha$ be the embedding of $(\mathcal{P}(I),\supseteq)$ from Theorem \ref{thm-pow-embed}, then $\{\alpha(X_1) \otimes \dots \otimes \alpha(X_n) \mid X_i \in \mathcal{P}(I)\}$ is a sub-Brouwer algebra of $[\alpha(I),\alpha(\emptyset)]_\M$ which is isomorphic to the free Brouwer algebra over the upper implicative semilattice $(\mathcal{P}(I),\supseteq)$.
\end{cor}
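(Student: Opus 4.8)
The statement to prove is Corollary~\ref{cor-emb-ext}, which is an almost immediate consequence of the two results it cites. The plan is to simply verify that the hypotheses of Proposition~\ref{prop-free-canonical} apply to the embedding $\alpha$ from Theorem~\ref{thm-pow-embed}, working inside the Brouwer algebra $[\alpha(I),\alpha(\emptyset)]_\M$.

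First I would observe that Corollary~\ref{cor-range-canonical} already establishes precisely what is needed: the range of $\alpha$ is a canonical subset of $[\alpha(I),\alpha(\emptyset)]_\M$. Thus I can take $\BB = [\alpha(I),\alpha(\emptyset)]_\M$ and $\BC = \mathrm{range}(\alpha)$ and invoke Proposition~\ref{prop-free-canonical} directly. This immediately tells me that the smallest sub-Brouwer algebra of $[\alpha(I),\alpha(\emptyset)]_\M$ containing $\mathrm{range}(\alpha)$ is exactly $\{a_1 \otimes \dots \otimes a_n \mid a_i \in \mathrm{range}(\alpha)\}$, which (rewriting each $a_i$ as $\alpha(X_i)$ for some $X_i \in \mathcal{P}(I)$) is the set $\{\alpha(X_1) \otimes \dots \otimes \alpha(X_n) \mid X_i \in \mathcal{P}(I)\}$ named in the statement.

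The second half of the claim — that this algebra is isomorphic to the free Brouwer algebra over the upper implicative semilattice $(\mathcal{P}(I),\supseteq)$ through an isomorphism fixing the generators — follows by combining the two facts. Proposition~\ref{prop-free-canonical} gives an isomorphism onto the free Brouwer algebra over the upper implicative semilattice $\BC = \mathrm{range}(\alpha)$, fixing $\BC$. Since Theorem~\ref{thm-pow-embed} asserts that $\alpha$ is an upper implicative semilattice embedding, $\mathrm{range}(\alpha)$ is isomorphic as an upper implicative semilattice to $(\mathcal{P}(I),\supseteq)$ via $\alpha$ itself. The free Brouwer algebra construction is functorial (depends only on the isomorphism type of the generating upper implicative semilattice), so the free Brouwer algebra over $\mathrm{range}(\alpha)$ is isomorphic to the free Brouwer algebra over $(\mathcal{P}(I),\supseteq)$, and composing the isomorphisms yields the result.

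Since every ingredient is supplied by the cited results, there is no genuine obstacle here; the corollary is a straightforward specialisation. The only point requiring a moment's care is the bookkeeping in the last paragraph: one must be explicit that ``free Brouwer algebra over $\BC$'' depends only on the upper-implicative-semilattice structure of $\BC$, so that the semilattice isomorphism $\alpha\colon(\mathcal{P}(I),\supseteq) \to \mathrm{range}(\alpha)$ lifts to an isomorphism of the corresponding free Brouwer algebras. Given how the statement is phrased, I would expect the author simply to write ``This follows immediately from Proposition~\ref{prop-free-canonical} together with Corollary~\ref{cor-range-canonical}'' and leave these routine verifications to the reader.
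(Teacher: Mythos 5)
Your proposal is correct and follows exactly the paper's route: the paper's own proof is the one-line ``From Corollary \ref{cor-range-canonical} and Proposition \ref{prop-free-canonical}'', and your argument just spells out the routine step that the paper leaves implicit, namely that freeness transfers along the upper implicative semilattice isomorphism $\alpha\colon(\mathcal{P}(I),\supseteq)\to\mathrm{range}(\alpha)$.
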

\begin{proof}
From Corollary \ref{cor-range-canonical} and Proposition \ref{prop-free-canonical}.
\end{proof}

Let $\BB_n$ be the Brouwer algebra of the upwards closed subsets of $(\mathcal{P}(\{1,\dots,n\}) \setminus \{\emptyset\},\supseteq)$ ordered under reverse inclusion $\supseteq$, i.e.\ the elements of $\BB_n$ are those $A \subseteq \mathcal{P}(\{1,\dots,n\}) \setminus \emptyset$ such that if $X \in A$ and $Y \in \mathcal{P}(\{1,\dots,n\}) \setminus \{\emptyset\}$ is such that $X \supseteq Y$, then $Y \in A$. We can use $\BB_n$ to capture IPC in the following way:

\begin{prop}{\rm (\cite[the remark following Lemma 3]{skvortsova-1988-en})}\label{prop-free-n-ipc}
$\bigcap_{n > 0} \bigcap_{x \in \BB_n} \mathrm{Th}(\BB_n / x) = \mathrm{IPC}$.
\end{prop}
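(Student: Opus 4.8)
The inclusion $\mathrm{IPC} \subseteq \bigcap_{n>0}\bigcap_{x\in\BB_n}\Th(\BB_n/x)$ is immediate from soundness (the earlier proposition giving $\mathrm{IPC}\subseteq\Th(\BB)$ for every Brouwer algebra $\BB$), since each $\BB_n/x$ is a Brouwer algebra. So the entire content lies in the reverse inclusion, which I would prove contrapositively using the finite model property of $\mathrm{IPC}$: if $\phi\notin\mathrm{IPC}$ then $\phi$ is refuted on some finite rooted poset $Q$, viewed as an intuitionistic Kripke frame, and I must produce a single $n$ and a single $x\in\BB_n$ with $\phi\notin\Th(\BB_n/x)$.

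The bridge is to read each factor $\BB_n/x$ as the logic of a Kripke frame. Write $P_n=(\mathcal{P}(\{1,\dots,n\})\setminus\{\emptyset\},\supseteq)$; then by construction $\BB_n$ is exactly the Brouwer algebra of up-sets of $P_n$, with bottom $0=P_n$ and top $1=\emptyset$. Using the earlier proposition that $\BB_n/x\cong[0,x]_{\BB_n}$, I would check that the up-sets $z\supseteq x$ (the elements of the interval) are in bijection, via $z\mapsto z\setminus x$, with the up-sets of the sub-poset $D_x:=P_n\setminus x$, and that this bijection is a Brouwer-algebra isomorphism; here $D_x$ is a down-set of $P_n$ because $x$ is an up-set, and it is rooted (for $x\neq 0$) since it contains the $\leq$-least element $\{1,\dots,n\}$. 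Consequently $\Th(\BB_n/x)=\mathrm{Logic}(D_x)$, the set of formulas valid on the frame $D_x$, and the family of frames so obtained is exactly the family of all down-sets of the various $P_n$.

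With this translation the claim reduces to a single combinatorial lemma: every finite rooted poset $Q$ is a surjective p-morphic (bounded-morphic) image of some down-set $D$ of some $P_n$. Granting the lemma, I would finish as follows: validity of a formula is preserved under surjective p-morphisms, so $\mathrm{Logic}(D)\subseteq\mathrm{Logic}(Q)$, whence $\phi\notin\mathrm{Logic}(Q)$ yields $\phi\notin\mathrm{Logic}(D)=\Th(\BB_n/x)$ for $x=P_n\setminus D$, which is precisely what is required. To prove the lemma I would embed $Q=\{q_1,\dots,q_m\}$ into $P_m$ by sending $q$ to its principal up-set $\hat q=\{j\mid q_j\geq q\}$ (an order-embedding into $(\mathcal{P}(\{1,\dots,m\}),\supseteq)$), take $D$ to be the down-set of $P_m$ generated by $\{\hat q\mid q\in Q\}$ (equivalently the $\subseteq$-upward closure, which is automatically closed under unions), and define a map $p\colon D\to Q$ assigning to each $Y\in D$ the element of $Q$ determined by the collection of $\hat q$ lying below $Y$, after which I would verify monotonicity and the p-morphism back-condition.

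The main obstacle is exactly this combinatorial lemma, and within it the correct definition of $p$ together with the back-condition. The subtlety is that an arbitrary finite rooted $Q$ need not be a meet-semilattice, whereas every down-set of $P_n$ is closed under unions and is therefore a meet-semilattice; so $D$ will in general be strictly larger than $Q$, and the collapse $p$ must be chosen to merge the spurious ``union'' worlds of $D$ onto the correct elements of $Q$ while respecting the order and lifting successors. It is worth emphasising that the factors cannot be dispensed with: the intersection $\bigcap_{n}\Th(\BB_n)$ over the full frames is Medvedev's logic, which properly extends $\mathrm{IPC}$, so passing to the down-set subframes supplied by the factors $\BB_n/x$ is essential to descend all the way to $\mathrm{IPC}$.
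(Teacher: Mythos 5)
Your reduction is sound up to the final lemma, and it is a genuinely different route from the paper's: the identification of $\Th(\BB_n/x)$ with the logic of the Kripke frame $P_n\setminus x$, the appeal to the finite model property of $\mathrm{IPC}$, and the preservation of validity under surjective p-morphisms are all correct, whereas the paper never mentions frames at all --- it cites Medvedev's theorem that the positive fragment of $\bigcap_{n}\Th(\BB_n)$ equals $\mathrm{IPC}^+$ together with Jankov's theorem that $\phi\notin\mathrm{IPC}$ implies $\phi'\notin\mathrm{IPC}^+$ (where $\phi'$ replaces $\bot$ by the conjunction of all variables plus a fresh one), and then refutes $\phi$ in $\BB_n/x$ with $x=x_1\oplus\dots\oplus x_{m+1}$. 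But your proof has a genuine gap exactly where you locate the ``main obstacle'': the combinatorial lemma carries the entire content of the proposition (it is essentially Medvedev's theorem in frame-theoretic disguise), and the construction you propose for it fails.

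Concretely, let $Q$ consist of a root $r$, two incomparable elements $c,d$ above $r$, and two incomparable elements $a,b$ above both $c$ and $d$. Indexing $r,c,d,a,b$ by $1,\dots,5$ gives $\hat{a}=\{4\}$, $\hat{b}=\{5\}$, $\hat{c}=\{2,4,5\}$, $\hat{d}=\{3,4,5\}$, $\hat{r}=\{1,\dots,5\}$, so your $D$ is $\{Y\in P_5 \mid Y\cap\{4,5\}\neq\emptyset\}$, whose only maximal elements (i.e.\ $\subseteq$-minimal members) are $\{4\}$ and $\{5\}$. No surjective p-morphism $p\colon D\to Q$ exists, however the ``union worlds'' are collapsed. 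Indeed, maximal elements must map to maximal elements, and each maximal element of $Q$ must be the image of a maximal element of $D$, so surjectivity forces (WLOG) $p(\{4\})=a$ and $p(\{5\})=b$. Then any $Y$ with $p(Y)=a$ must omit $5$ (otherwise $\{5\}\geq Y$ in the frame order and monotonicity gives $b\geq a$), hence contains $4$; symmetrically for $b$. Now if $p(Y)=c$, the back-condition applied to $c\leq a$ and $c\leq b$ produces subsets of $Y$ containing $4$ and containing $5$, so $\{4,5\}\subseteq Y$, i.e.\ $\{4,5\}\geq Y$, and monotonicity gives $p(\{4,5\})\geq c$; the same holds for $d$. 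Hence if both $c$ and $d$ were in the image, $p(\{4,5\})$ would be a common upper bound of $c$ and $d$, i.e.\ $p(\{4,5\})\in\{a,b\}$, contradicting that $\{4,5\}$ contains both $4$ and $5$. So $p$ cannot be surjective. The lemma itself is true --- for instance, it can be recovered from the proposition via Jankov's characteristic formulas, using that point-generated subframes of down-sets of Medvedev frames are again such down-sets --- but any direct proof needs a substantially larger and more carefully chosen $D$ than the down-set generated by the principal up-sets of $Q$; that construction is precisely the nontrivial work the paper sidesteps by quoting Medvedev \cite{medvedev-1962} and Jankov \cite{jankov-1968-2}.
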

\begin{proof}
Let $\mathrm{LM} = \bigcap_{n > 0} \mathrm{Th}(\BB_n)$, the \emph{Medvedev logic of finite problems}. Given a set of formulas $X$, let $X^+$ denote the set of positive (i.e.\ negation-free) formulas in $X$. Then $\mathrm{LM}^+ = \mathrm{IPC}^+$, see Medvedev \cite{medvedev-1962}.

Now, let $\phi(x_1,\dots,x_m)$ be any formula. Let $\phi'(x_1,\dots,x_{m+1})$ be the formula where $x_{m+1}$ is a fresh variable and where $\bot$ is replaced by $x_1 \wedge \dots \wedge x_{m+1}$, so $\phi'$ is negation-free. Then, if $\phi \not\in \mathrm{IPC}$, we have $\phi' \not\in \mathrm{IPC}^+$ (see Jankov \cite{jankov-1968-2}), so there are $n \in \omega$ and $x_1,\dots,x_{m+1} \in \BB_n$ such that $\phi'(x_1,\dots,x_{m+1}) \not= 0$. Let $x = x_1 \oplus \dots \oplus x_{m+1}$, then $\phi \not\in \mathrm{Th}(\BB_n / x)$.
\end{proof}

Furthermore, it is easy to obtain these $\BB_n$ as free distributive lattices over upper implicative semilattices, as expressed by the following proposition.

\begin{prop}{\rm (\cite[Lemma 3]{skvortsova-1988-en})}\label{prop-free-n-isom}
The Brouwer algebra $\BB_n$ is isomorphic to the free distributive lattice over the upper implicative semilattice $(\mathcal{P}(\{1,\dots,n\}),\supseteq)$.
\end{prop}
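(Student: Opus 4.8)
The plan is to realise $\BB_n$ as the sub-Brouwer algebra generated by a canonical copy of the upper implicative semilattice $(\mathcal{P}(\{1,\dots,n\}),\supseteq)$ and then to invoke Proposition~\ref{prop-free-canonical}. Recall that the elements of $\BB_n$ are the families of nonempty subsets of $\{1,\dots,n\}$ closed under taking nonempty subsets, that the order is reverse inclusion, and that consequently $\oplus$ is intersection, $\otimes$ is union, $0$ is the family of all nonempty subsets, and $1=\emptyset$. For $X\subseteq\{1,\dots,n\}$ define
\[\beta(X)=\{Y\in\mathcal{P}(\{1,\dots,n\})\setminus\{\emptyset\}\mid Y\subseteq X\},\]
the principal complex generated by $X$; note $\beta(\{1,\dots,n\})=0$ and $\beta(\emptyset)=1$. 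First I would check that $\beta$ is an upper implicative semilattice embedding of $(\mathcal{P}(\{1,\dots,n\}),\supseteq)$ into $\BB_n$. Injectivity is immediate (if $x\in X\setminus Y$ then $\{x\}\in\beta(X)\setminus\beta(Y)$), and preservation of $0$ and $1$ is noted above. Since the join in $(\mathcal{P}(\{1,\dots,n\}),\supseteq)$ is intersection, preservation of $\oplus$ amounts to $\beta(X\cap Y)=\beta(X)\cap\beta(Y)$, which is clear.

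The one genuine computation is preservation of implication, for which I would first pin down the Brouwer implication of $\BB_n$. Since $\BB_n$ is ordered by reverse inclusion, $A\to_{\BB_n}B$ is the largest element $C$ of $\BB_n$ with $A\cap C\subseteq B$; a short verification gives the explicit formula
\[\beta(X)\to_{\BB_n}B=\{W\neq\emptyset\mid W\cap X=\emptyset\text{ or }W\cap X\in B\}.\]
One checks the right-hand side lies in $\BB_n$, satisfies $\beta(X)\cap C\subseteq B$, and is largest: if $W\cap X\neq\emptyset$ and $W\cap X\notin B$, then $W\cap X\in\beta(X)$ witnesses failure for any larger $C$. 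Taking $B=\beta(Y)$ and using that $W\cap X\in\beta(Y)$ iff $W\cap X\subseteq Y$ collapses the condition to $W\subseteq\overline{X}\cup Y$, where $\overline{X}$ is the complement in $\{1,\dots,n\}$, so that $\beta(X)\to_{\BB_n}\beta(Y)=\beta(\overline{X}\cup Y)=\beta(X\to Y)$, since the implication in the Boolean algebra $(\mathcal{P}(\{1,\dots,n\}),\supseteq)$ is $X\to Y=\overline{X}\cup Y$. This establishes that $\beta$ is an upper implicative semilattice embedding.

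It remains to see that the range of $\beta$ is \emph{canonical} and generates $\BB_n$. Each $\beta(X)$ is a principal complex, hence $\otimes$-irreducible: if $\beta(X)=A\cup A'$ then $X$ lies in one of $A,A'$, which then already contains all of $\beta(X)$. Closure under $\oplus$ and $\to$ is exactly the preservation established above, and the distributivity requirement $\beta(X)\to(B\otimes C)=(\beta(X)\to B)\otimes(\beta(X)\to C)$ follows directly from the explicit formula, since the defining condition that $W\cap X=\emptyset$ or $W\cap X\in B\cup C$ visibly distributes over the union $B\cup C$. Thus the range of $\beta$ is canonical. Finally, every element $A\in\BB_n$ is the union of the principal complexes generated by its $\subseteq$-maximal members, so that $A$ equals the meet $\otimes$ of finitely many elements $\beta(X)$, and hence the range of $\beta$ generates $\BB_n$ as a Brouwer algebra. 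Proposition~\ref{prop-free-canonical} then identifies $\BB_n$ with the free Brouwer algebra---equivalently the free distributive lattice---over $(\mathcal{P}(\{1,\dots,n\}),\supseteq)$, through an isomorphism fixing the image of $\beta$. The main obstacle is the implication computation: getting the Brouwer (rather than Heyting) implication of the reverse-inclusion order right and deriving the explicit formula, on which both the embedding property and the crucial distributivity condition for canonicity depend.
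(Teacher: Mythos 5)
Your proof is correct. Note that the paper itself contains no argument for this proposition: it is stated as a quotation of \cite[Lemma 3]{skvortsova-1988-en}, so there is no in-paper proof to compare against, and what you have done is reconstruct Skvortsova's lemma from the material the paper does provide. Your route is exactly in the spirit of the paper's own machinery: you embed $(\mathcal{P}(\{1,\dots,n\}),\supseteq)$ into $\BB_n$ as the principal down-sets $\beta(X)$, verify that the image is a canonical subset, observe that every element of $\BB_n$ is a finite $\otimes$ (union) of principal down-sets, and invoke Proposition~\ref{prop-free-canonical}; this parallels precisely how Corollary~\ref{cor-range-canonical} and Corollary~\ref{cor-emb-ext} are derived for the Medvedev lattice. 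The computations all check: in $\BB_n$ the Brouwer implication $A\to B$ is indeed the largest $C$ with $A\cap C\subseteq B$, your explicit formula for $\beta(X)\to B$ is verified correctly (membership in $\BB_n$, the inequality, and maximality via closure of any competitor under nonempty subsets), and it yields both preservation of $\to$ and condition (\ref{canon-3}) of canonicity; meet-irreducibility of $\beta(X)$ works also in the degenerate case $X=\emptyset$, since $\emptyset=A\cup A'$ forces $A=A'=\emptyset$. Finally, your identification of the free Brouwer algebra with the free distributive lattice over the upper implicative semilattice is consistent with the paper's own usage, which treats the two interchangeably (compare Proposition~\ref{prop-free-canonical} and Corollary~\ref{cor-emb-ext} with Proposition~\ref{prop-free-n-isom} and Corollary~\ref{cor-n-isom}).
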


\begin{cor}\label{cor-n-isom}
Let $I$ be a set of size $n$. 
If we let $\alpha$ be the embedding of $(\mathcal{P}(I),\supseteq)$ from Theorem \ref{thm-pow-embed}, then $\{\alpha(X_1) \otimes \dots \otimes \alpha(X_m) \mid m \in \omega \wedge \forall i \leq m (X_i \in \mathcal{P}(I))\}$ is a sub-Brouwer algebra of $[\alpha(I),\alpha(\emptyset)]_\M$ isomorphic to $\BB_n$.
\end{cor}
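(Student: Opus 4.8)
The plan is to read this off Corollary \ref{cor-emb-ext} and Proposition \ref{prop-free-n-isom}, using the hypothesis $|I| = n$ to match up the two underlying upper implicative semilattices, and bridging the mild terminological gap between the two statements by the fact that over an upper implicative semilattice the free Brouwer algebra and the free distributive lattice coincide. First I would note that the set displayed in the statement, namely $\{\alpha(X_1) \otimes \dots \otimes \alpha(X_m) \mid m \in \omega \wedge \forall i \leq m (X_i \in \mathcal{P}(I))\}$, is literally the same set as the one in Corollary \ref{cor-emb-ext} (the only difference is that there the number of $\otimes$-factors is written with the dummy $n$ rather than $m$); both are just the collection of all finite $\otimes$-combinations of elements in the range of $\alpha$, i.e.\ the sub-Brouwer algebra of $[\alpha(I),\alpha(\emptyset)]_\M$ generated by that range. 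So by Corollary \ref{cor-emb-ext} this set is a sub-Brouwer algebra isomorphic to the free Brouwer algebra over $(\mathcal{P}(I),\supseteq)$.

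Next I would exploit $|I| = n$. Any bijection $I \to \{1,\dots,n\}$ induces a bijection $\mathcal{P}(I) \to \mathcal{P}(\{1,\dots,n\})$ that preserves reverse inclusion $\supseteq$, the induced joins (intersection), and the implication $X \to Y = (I \setminus X) \cup Y$ computed in the proof of Theorem \ref{thm-pow-embed}. Hence $(\mathcal{P}(I),\supseteq)$ and $(\mathcal{P}(\{1,\dots,n\}),\supseteq)$ are isomorphic as upper implicative semilattices, so they have isomorphic free objects. By Proposition \ref{prop-free-n-isom}, $\BB_n$ is the free distributive lattice over $(\mathcal{P}(\{1,\dots,n\}),\supseteq)$.

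The one point requiring care, and the step I expect to be the crux, is that these two ``free'' constructions agree: over an upper implicative semilattice the freely generated distributive lattice automatically carries a Brouwer implication extending the given one and is the free Brouwer algebra. This is the order-dual of the standard fact that the distributive lattice freely generated by an implicative meet-semilattice is a Heyting algebra whose relative pseudocomplement extends that of the semilattice, and I would either cite it or verify it directly, using that the forced identity $a \to \bigotimes_i b_i = \bigotimes_i (a \to b_i)$ (which is exactly canonicity, cf.\ Corollary \ref{cor-range-canonical}) makes the finite $\otimes$-combinations close up into a Brouwer algebra with the correct universal property. Granting this, the chain closes: the sub-Brouwer algebra is isomorphic to the free Brouwer algebra over $(\mathcal{P}(I),\supseteq)$, hence to the free Brouwer algebra over $(\mathcal{P}(\{1,\dots,n\}),\supseteq)$, hence to the free distributive lattice over $(\mathcal{P}(\{1,\dots,n\}),\supseteq)$, which by Proposition \ref{prop-free-n-isom} is $\BB_n$, giving the claim.
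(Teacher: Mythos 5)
Your proposal is correct and takes essentially the same route as the paper, whose entire proof is the citation ``From Corollary~\ref{cor-emb-ext} and Proposition~\ref{prop-free-n-isom}.'' The details you supply---transporting along a bijection $I \cong \{1,\dots,n\}$ and identifying the free Brouwer algebra over an upper implicative semilattice with the free distributive lattice over it (via the canonicity identity $a \to (b \otimes c) = (a \to b) \otimes (a \to c)$)---are precisely what the paper leaves implicit, so you have merely made its argument explicit rather than changed it.
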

\begin{proof}
From Corollary \ref{cor-emb-ext} and Proposition \ref{prop-free-n-isom}.
\end{proof}

The following lemma allows us to compare the theories of different intervals.

\begin{lem}{\rm (\cite[Lemma 4]{skvortsova-1988-en})}\label{lem-intervals}
In any Brouwer algebra $\BB$: if $x,y,z \in \BB$ are such that $x \oplus z = y$, then $\mathrm{Th}([0,z]_\BB) \subseteq \mathrm{Th}([x,y]_\BB)$.
\end{lem}
\begin{proof}
Let $\gamma: [0,z]_\BB \to [x,y]_\BB$ be given by $\gamma(u) = x \oplus u$. This map is well-defined, since if $u \leq z$, then
$x \oplus u \leq x \oplus z = y$. Clearly $\gamma$ preserves $\oplus$ and $\otimes$, while for $\to$ we have:
\[\gamma(u \to_{[0,z]_\BB} v) = (u \to_\BB v) \oplus x = ((u \oplus x) \to_\BB (v \oplus x)) \oplus x = \gamma(u) \to_{[x,y]_\BB} \gamma(v).\]
Furthermore, $\gamma$ is surjective, so the result now follows from Lemma \ref{surj-th}.
\end{proof}

Before we get to the proof of Theorem \ref{thm-main} we need one theorem from computability theory.

\begin{thm}\label{thm-splitting-ext}
Let $A,E \in 2^\omega$ be such that $E \geq_T A'$. Let $B_0,B_1,\dots \in 2^\omega$ be uniformly computable in $E$ and such that $A \not\geq_T B_i$ . Then there exists a set $D \geq_T A$ such that $D' \leq_T E$ and such that for all $i \in \omega$ we have $D \oplus B_i \geq_T E$.
\end{thm}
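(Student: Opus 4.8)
The plan is to build $D$ as the increasing union $D=\bigcup_s \sigma_s$ of a sequence of finite binary strings produced by a finite-extension construction that runs computably in $E$. Running computably in $E$ already gives $D\le_T E$, and I would impose three families of requirements: a reserved column coding $A$ into $D$ (so that $A\le_T D$); jump requirements $R_e$ that decide $\Phi_e^D(e)$ (so that $D'\le_T E$); and coding requirements $S_i$ that build, alongside $D$, Turing functionals $\Gamma_i$ with $\Gamma_i^{D\oplus B_i}=E$ (so that $D\oplus B_i\ge_T E$). Observe first that $B_i\not\le_T D$ and $D\not\ge_T E$ will then hold automatically: if $E\le_T D$ we would get $D'\le_T E\le_T D$, which is impossible, and likewise $B_i\le_T D$ would force $E\le_T D\oplus B_i=D$. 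This is also where the hypothesis $A\not\ge_T B_i$ is unavoidable---if we had $B_i\le_T A\le_T D$ the conclusion would be contradictory---and, as explained below, it is exactly what makes the coding possible.

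First I would dispose of the jump. Reserving a single column for the bits of $A$ and filling it using $A\le_T E$ secures $A\le_T D$. For $R_e$, given the current condition $\sigma$, I ask whether there is an extension $\tau\supseteq\sigma$ that respects the reserved $A$-column and satisfies $\Phi_e^\tau(e)\downarrow$; if so I commit to such a $\tau$ and protect its use forever, otherwise I keep $\sigma$ and thereby force divergence. This question is $\Sigma^0_1(A)$, hence decidable from $A'\le_T E$, which is precisely why the hypothesis is $E\ge_T A'$ rather than merely $E\ge_T A$. Since all later actions (the coding below and the $A$-filling) take place on fresh, large positions, they never disturb a use that has already been protected nor revive a computation that has been forced to diverge; hence each $R_e$ is settled correctly at a stage that $E$ can locate, and $D'\le_T E$ follows.

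The heart of the construction is the coding requirement $S_i$, and I would carry it out in the style of the Posner--Robinson theorem: the bits of $E$ are coded into $D$ at positions that can only be located with $B_i$ as a key. Concretely, working on a column reserved for $S_i$, I would code $E(n)$ by passing to a \emph{$B_i$-splitting extension}: a pair of extensions of the current condition, differing only on fresh positions, that a fixed functional separates once $B_i$ is supplied. Selecting the branch according to $E(n)$ codes the bit, and $\Gamma_i$ recovers it by using $B_i$ to decide which branch was taken; that $D$ alone cannot decode is not something I enforce but follows from $B_i\not\le_T D$ established above. Because $B_i\le_T E$, searching for such splitting extensions is $E$-computable, so the whole construction remains $E$-computable and the jump bound of the previous paragraph is preserved. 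The role of $A\not\ge_T B_i$ is to guarantee that such splittings always exist: the only data fixed in advance are the current condition and the $A$-coded skeleton, so if no $B_i$-splitting extension were available then the separating functional's value would be forced regardless of the fresh bits, the relevant bits of $B_i$ could be read off from $A$ and $\sigma$, and carrying this out uniformly would give $B_i\le_T A$, a contradiction.

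The main obstacle is exactly this tension between the coding requirements and the jump requirement. Achieving $D\oplus B_i\ge_T E$ demands that all of $E$---which may lie far above $A'$---be actively injected into $D$, while $D'\le_T E$ forbids $D\ge_T E$ and so forces the injected information to be genuinely locked by $B_i$. Reconciling the two is possible only because $B_i$ is used merely to \emph{locate} the coded bits and is never stored in $D$, so that the location is invisible to $D$ alone; making this simultaneously correct for every $i$, with $A$ coded in and the jump controlled, while keeping every step $E$-computable, is the crux. I expect the careful bookkeeping that keeps the coding columns, the $A$-column, and the protected uses from colliding to be routine, but it is the point on which the $E$-computability of the construction---and hence the bound $D'\le_T E$---ultimately rests.
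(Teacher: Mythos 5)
The paper's own proof of this theorem is a one-line citation: the statement is precisely Posner and Robinson's Theorem~3 relativised to $A$. So your proposal is, in effect, an attempt to reprove that theorem from scratch, and its outer architecture is the right one: a finite-extension construction recursive in $E$; a reserved column coding $A$ so that $D \ge_T A$; jump questions quantifying over extensions that respect only the $A$-column, so that each question is $\Sigma^0_1(A)$ and hence decidable from $A' \le_T E$; and the observation that divergence, once forced with respect to that class of extensions, is preserved because every later action of the construction stays inside that class. Your peripheral remarks (that $B_i \not\le_T D$ and $E \not\le_T D$ come for free, and that $A \not\ge_T B_i$ is necessary) are also correct.

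The gap is in the coding requirements $S_i$, which is exactly where the substance of the Posner--Robinson theorem lies. First, the mechanism is never actually defined: a ``$B_i$-splitting extension \dots\ that a fixed functional separates once $B_i$ is supplied'' presupposes a separating functional that is never exhibited, and the alternative of ``building $\Gamma_i$ alongside the construction'' is not available, since a Turing functional must have a c.e.\ set of axioms, whereas axioms declared during an $E$-recursive construction are only $E$-c.e.; the decoder must be a fixed algorithm whose correctness is then verified. Second, and more seriously, the difficulty is not the \emph{existence} of two extensions the decoder could tell apart (your ``no splitting implies $B_i \le_T A$'' argument, which in any case is circular as stated, since evaluating the separating functional on an oracle involving $B_i$ already requires $B_i$), but the decoder's ability to \emph{locate} the coded bits inside $D$. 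The jump step for $R_e$ must search over all extensions respecting only the $A$-column: restricting that search by any $B_i$-dependent condition would turn the jump question into a $\Sigma^0_1(A \oplus B_i)$ one, and $E \ge_T A'$ together with $B_i \le_T E$ does not give $E \ge_T (A \oplus B_i)'$ (take $A = \emptyset$, $E = 0'$ and $B_i$ a high $\Delta^0_2$ set). Consequently the jump steps necessarily write arbitrary content into the $S_i$-coding region, content which can imitate whatever pattern or signal the decoder scans for; the blocks written for the other requirements $S_j$, $j \ne i$, cause the same problem. Making every decoder parse correctly through this garbage, simultaneously for all $i$ and without breaking the $\Sigma^0_1(A)$ jump questions, is precisely the hard part of Posner and Robinson's proof and is where $B_i \not\le_T A$ does its real work; it is not ``routine bookkeeping'', and your sketch contains no idea for handling it. As written the proposal is therefore not a proof: you should either cite Posner--Robinson Theorem~3 relativised to $A$, as the paper does, or reproduce their coding-and-parsing argument in full.
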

\begin{proof}
This follows from relativising Posner and Robinson \cite[Theorem 3]{posner-robinson-1981} to $A$.
\end{proof}

Finally, we need an easy lemma on extending computably independent sets. For ease of notation, let us assume that our pairing function is such that $(A \oplus B)^{[2i]} = A^{[i]}$ and $(A \oplus B)^{[2i+1]} = B^{[i]}$.

\begin{lem}\label{lem-extend}
Let $A$ be a computably independent set. Then there exists a set $B$ such that $A \oplus B$ is computably independent.
\end{lem}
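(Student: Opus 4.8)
The plan is to build $B$ by a Kleene--Post style forcing construction relative to $A$, producing $B$ as the union of a sequence of finite conditions (finite partial functions on $\omega$), each extending the previous one. Using the pairing convention fixed above, one first unpacks what has to be arranged: $A \oplus B$ is computably independent precisely when, for every $i \in \omega$ and every Turing functional $\Phi$, the requirement $R^A_{i,\Phi}$, namely $\Phi^{B \oplus \bigoplus_{j \neq i} A^{[j]}} \neq A^{[i]}$, and the requirement $R^B_{i,\Phi}$, namely $\Phi^{A \oplus \bigoplus_{j \neq i} B^{[j]}} \neq B^{[i]}$, are both met. Indeed, deleting the even column $2i$ from $A \oplus B$ leaves, up to Turing equivalence, $B \oplus \bigoplus_{j \neq i} A^{[j]}$, whose target is $A^{[i]}$, while deleting the odd column $2i+1$ leaves $A \oplus \bigoplus_{j \neq i} B^{[j]}$, whose target is $B^{[i]}$. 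I would also add the trivial dense requirements forcing $n \in \operatorname{dom}(\sigma)$ for each $n$, so that the resulting $B$ is total.

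The requirements $R^B_{i,\Phi}$ are handled by routine diagonalisation. Given a condition $\sigma$, let $m$ be the least position of column $i$ left undetermined by $\sigma$. Since the oracle $A \oplus \bigoplus_{j \neq i} B^{[j]}$ never queries column $i$, either some extension $\tau \supseteq \sigma$ affecting only the columns $j \neq i$ yields $\Phi^{A \oplus \bigoplus_{j \neq i} \tau^{[j]}}(m)\!\downarrow = v$, in which case I extend $\tau$ by setting $B^{[i]}(m)$ to differ from $v$, or no such extension exists, in which case $\sigma$ already forces divergence at $m$ (any convergent computation for the eventual $B$ would use only finitely much of the oracle, hence already converge for some such $\tau$). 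In either case the corresponding set of conditions is dense.

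The main obstacle is the family $R^A_{i,\Phi}$, because $A$ is fixed and one must rule out that the generic $B$ supplies exactly the extra information needed for $\bigoplus_{j \neq i} A^{[j]}$ to compute the missing column $A^{[i]}$; here the hypothesis that $A$ is computably independent is essential. Writing $X = \bigoplus_{j \neq i} A^{[j]}$ and $Z = A^{[i]}$, we have $Z \not\leq_T X$. Given $\sigma$: if some extension $\tau \supseteq \sigma$ produces a point $m$ with $\Phi^{\tau \oplus X}(m)\!\downarrow \neq Z(m)$, then $\tau$ forces the requirement. Otherwise every convergent computation agrees with $Z$, and if moreover for every $m$ some extension of $\sigma$ makes $\Phi^{(\cdot) \oplus X}(m)$ converge, then searching with oracle $X$, and with $\sigma$ as a parameter, for such an extension and reading off its value computes $Z$ from $X$, contradicting $Z \not\leq_T X$. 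Hence some $m$ admits no converging extension, so $\sigma$ forces $\Phi^{B \oplus X}(m)\!\uparrow$ and $R^A_{i,\Phi}$ is met, showing that this set of conditions is dense as well. Since there are only countably many requirements, each yielding a dense set of conditions, a sufficiently generic $B$ meeting all of them exists by a standard diagonal construction, and for such a $B$ the set $A \oplus B$ is computably independent.
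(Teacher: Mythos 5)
Your proof is correct and takes essentially the same approach as the paper: a Kleene--Post finite-extension construction in which the $B$-column requirements are met by direct diagonalisation and the $A$-column requirements are met by observing that failure would let $\bigoplus_{j \neq i} A^{[j]}$ compute $A^{[i]}$ by searching over finite conditions for convergent computations, contradicting computable independence. The only differences are presentational: you phrase the argument as meeting countably many dense sets of conditions (and add explicit totality requirements), whereas the paper runs the same argument as an explicit stage-by-stage construction followed by a verification of the requirements.
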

\begin{proof}
Our requirements are as follows:
\begin{align*}
R_{\langle e,2n \rangle}&: A^{[n]} \not= \{e\}^{\bigoplus_{i \not= 2n} (A \oplus B)^{[i]}}\\
R_{\langle e,2n+1 \rangle}&: B^{[n]} \not= \{e\}^{\bigoplus_{i \not= 2n+1} (A \oplus B)^{[i]}}.
\end{align*}

We build $B$ by the finite extension method, i.e.\ we define strings $\sigma_0 \subseteq \sigma_1 \subseteq \dots$ and let $B = \bigcup_{s \in \omega} \sigma_s$. For ease of notation, define $\sigma_{-1} = \emptyset$. At stage $s$, we deal with requirement $R_s$. There are two cases:
\begin{itemize}
\item $s = \langle e,2n \rangle$: if there is a string $\sigma$ extending $\sigma_{s-1}$ and an $m \in \omega$ such that $\{e\}^{\bigoplus_{i \not= 2n} (A \oplus \sigma)^{[i]}}(m){\downarrow} \not= A^{[n]}(m)$, take $\sigma_s$ to be the least such $\sigma$. Otherwise, let $\sigma_s = \sigma_{s-1}$.
\item $s = \langle e,2n+1 \rangle$: if there exists a string $\sigma$ extending $\sigma_{s-1}$ such that we have $\{e\}^{\bigoplus_{i \not= 2n+1} (A \oplus \sigma)^{[i]}}(|\sigma_{s-1}| + 1){\downarrow}$, take the least such $\sigma$ and let $\sigma_s$ be the least string extending $\sigma_{s-1}$ which coincides with $\sigma$ outside the $n^\mathrm{th}$ column and such that $\sigma_s^{[n]}(|\sigma_{s-1}| + 1) = 1 - \{e\}^{\bigoplus_{i \not= 2n+1} (A \oplus \sigma)^{[i]}}(|\sigma_{s-1}| + 1)$. Otherwise, let $\sigma_s = \sigma_{s-1}$.
\end{itemize}

We claim: $B$ is as required. To this end, we verify the requirements:
\begin{itemize}
\item $R_{\langle e,2n \rangle}$: towards a contradiction, assume $A^{[n]} = \{e\}^{\bigoplus_{i \not= 2n} (A \oplus B)^{[i]}}$. Let $s = \langle e,2n \rangle$.
By construction we then know for every $\sigma$ extending $\sigma_{s-1}$ and every $m \in \omega$ that, if $\{e\}^{\bigoplus_{i \not= 2n} (A \oplus \sigma)^{[i]}}(m){\downarrow}$, we have $\{e\}^{\bigoplus_{i \not= 2n} (A \oplus \sigma)^{[i]}}(m) = A^{[n]}(m)$. Furthermore, for every $m \in \omega$ there is a string $\sigma$ extending $\sigma_{s-1}$ such that $\{e\}^{\bigoplus_{i \not= 2n} (A \oplus \sigma)^{[i]}}(m){\downarrow}$: just take a suitably long initial segment of $B$. However, this means that $\bigoplus_{i \not= n} A^{[i]} \geq_T A^{[n]}$, which contradicts $A$ being computably independent.
\item $R_{\langle e,2n+1 \rangle}$: let $s = \langle e,2n+1 \rangle$. Then by our construction we know that, if $\{e\}^{\bigoplus_{i \not= 2n+1} (A \oplus B)^{[i]}}(|\sigma_{s-1}| + 1) \downarrow$, then it differs from $B^{[n]}(|\sigma_{s-1}| + 1)$.
\end{itemize}
\end{proof}

We can now prove Theorem \ref{thm-main}.

\mainthm*
\begin{proof}
Fix $n \in \omega$ and $x \in \BB_n$. Let $I = \{1,\dots,n\}$. For now assume we have some downwards closed $\A$ and an antichain $D_1,\dots,D_n \in \A$. Then 
Corollary \ref{cor-n-isom} tells us that
\[\{\alpha(Y_1) \otimes \dots \otimes \alpha(Y_m) \mid m \in \omega \wedge \forall i \leq m(Y_i \in \mathcal{P}(I))\}\]
is a subalgebra of $\Big[\overline{\A} \cup C(\{D_1,\dots,D_n\}),\overline{\A}\Big]_\M$ isomorphic to $\BB_n$.
So, there are $X_1,\dots,X_k \subseteq I$ such that we can embed $\BB_n / x$ as subalgebra of
\[\Big[\overline{\A} \cup C(\{D_1,\dots,D_n\}), \alpha(X_1) \otimes \dots \otimes \alpha(X_k) \Big]_\M.\]
If we would additionally have that 
\begin{equation}\label{thm-main-eqn1}
\Big(\overline{\A} \cup C(\{D_1,\dots,D_n\})\Big) \oplus \left\{i \conc f \mid f \geq_T A^{[i]}\right\} \equiv_\M \alpha(X_1) \otimes \dots \otimes \alpha(X_k),
\end{equation}
then Lemma \ref{lem-intervals} tells us that
\begin{align*}
\mathrm{Th}\Big(\M / \Big\{i \conc f &\mid f \geq_T A^{[i]}\Big\}\Big)\\
&\subseteq \mathrm{Th}\Big(\Big[\overline{\A} \cup C(\{D_1,\dots,D_n\}),\alpha(X_1) \otimes \dots \otimes \alpha(X_k)\Big]_\M\Big)\\
&\subseteq \mathrm{Th}(\BB_n / x).
\end{align*}
Now, if we would be able to do this for arbitrary $n \in \omega$ and $x \in \BB_n$, then Proposition \ref{prop-free-n-ipc} tells us that
\[\mathrm{Th}\left(\M / \left\{i \conc f \mid f \geq_T A^{[i]}\right\}\right) = \mathrm{IPC},\]
so then we would be done.

\bigskip
Thus, it suffices to show that for all $n \in \omega$ and all $X_1,\dots,X_k \subseteq \{1,\dots,n\}$ there exists a downwards closed $\A$ and an antichain $D_1,\dots,D_n \in \A$ such that \eqref{thm-main-eqn1} holds. Fix a $B$ for $A$ as in Lemma \ref{lem-extend}. Let $\A = \omega^\omega \setminus C(\{(A \oplus B)'\})$. For every $1 \leq i \leq n$ fix a $D_i \geq_T \left(\bigoplus_{1 \leq j \leq k, i \in X_j} A^{[j]}\right) \oplus B^{[i]}$ such that $D_i' \leq_T (A \oplus B)'$, such that $D_i \oplus A^{[j]} \geq_T (A \oplus B)'$ for every $j \in \{1 \leq j \leq k \mid i \not\in X_j\} \cup \{k+1,k+2,\dots\}$ and such that $D_i \oplus B^{[j]} \geq_T (A \oplus B)'$ for every $j \not= i$, which exists by Theorem \ref{thm-splitting-ext}.

We claim: $\{D_1,\dots,D_n\}$ is an antichain in $\A$. Clearly, $D_1,\dots,D_n \in \A$. Next, let $1 \leq i < j \leq n$. Then:
\[D_i \oplus D_j \geq_T B^{[i]} \oplus D_j \geq_T (A \oplus B)',\]
so $D_i \oplus D_j \not\in \A$.

Thus, we need to show that \eqref{thm-main-eqn1} holds.
First, let $g \in \overline{\A} \cup C(\{D_1,\dots,D_n\})$ and let $f \geq_T A^{[j]}$. If $j > k$, then either $g \geq_T D_i$ for some $1 \leq i \leq n$ and $f \oplus g \geq_T A^{[j]} \oplus D_i \geq_T (A \oplus B)'$, or $g \geq_T (A \oplus B)'$ and then also $f \oplus g \geq_T (A \oplus B)'$. In both cases we see that
$f \oplus g \in \overline{\A} \subseteq \alpha(X_1)$.

Thus, we may assume that $j \leq k$. We claim: $f \oplus g \in \alpha(X_j)$. Indeed, if $g \geq_T D_i$ for some $i \in X_j$, then $f \oplus g \geq_T D_i$ and $C(D_i) \subseteq \alpha(X_j)$, while if $g \geq_T D_i$ for some $i \not\in X_j$, then $f \oplus g \geq_T A^{[j]} \oplus D_i  \geq_T (A \oplus B)'$, and finally, if $g \geq_T (A \oplus B)'$ then clearly $f \oplus g \geq_T (A \oplus B)'$. Thus, we see that $f \oplus g$ computes an element of $\alpha(X_1) \otimes \dots \otimes \alpha(X_k)$, and that this computation is in fact uniform in $(j \conc f) \oplus g$.

For the other direction, note that for fixed $1 \leq i \leq k$ we have that
\[C(X_i) \subseteq C(\{D_1,\dots,D_n\})\]
and also that
\[C(X_i) \subseteq \left\{f \mid f \geq_T A^{[i]}\right\}\]
because for every $j \in X_i$ we have that $D_j \geq_T A^{[i]}$.
\end{proof}

\section{Relativising the construction}

We will next show that Skvortsova's construction can be performed below every mass problem $\B >_\M 0'$. This also implies that for every $\B >_\M 0'$ we have that $\mathrm{Th}(\M / \B) \subseteq \mathrm{Jan}$, answering a question by Sorbi and Terwijn; see Corollary \ref{cor-jan} below.

First, note that for every $\B > 0'$ we can find a countable mass problem $\E \subseteq 0'$ such that $\E \not\geq_\M \B$ (e.g.\ by taking one function for every $n \in \omega$ witnessing that $\Phi_n(0') \not\subseteq \B$). Then the set $\{A \mid \forall f \in \E (A \not\geq_T f)\}$ has measure $1$ (by Sack's result that upper cones in the Turing degrees have measure $0$, see e.g.\ Downey and Hirschfeldt \cite[Corollary 8.12.2]{downey-hirschfeldt-2010}),
so it contains a 1-random set; in particular it contains a computably independent set $A$. In this section we will show that we can use such sets to obtain factors with theory $\mathrm{IPC}$ below $\B$, by relativising Theorem \ref{thm-main}.

However, we first show that we can relativise Theorem \ref{thm-pow-embed} below $\B$.

\begin{thm}\label{thm-pow-embed-rel}
Let $\B$ be a mass problem, let $\E$ be a mass problem such that $\E \not\geq_\M \B$ and let $\D = \E \to_\M \B$. Let $\A \subseteq \omega^\omega$ be a mass problem which is downwards closed under Turing reducibility such that $\E \subseteq \overline{\A}$. Let $(f_i)_{i \in I}$ be an antichain in $\A$. Then the map $\beta$ given by $\beta(X) = (\overline{\A} \cup \{g \mid \exists i \in X (g \geq_T f_i)\}) \otimes \D$ is an upper implicative semilattice embedding of $(\mathcal{P}(I),\supseteq)$ into $[\beta(I),\beta(\emptyset)]_\M$ with range canonical in $[\beta(I),\beta(\emptyset)]_\M$.
\end{thm}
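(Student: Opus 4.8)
The plan is to exploit that $\beta$ factors as $\beta(X) = \alpha(X) \otimes \D$, where $\alpha$ is precisely the embedding of Theorem \ref{thm-pow-embed} (note $\overline{\A} \cup \{g \mid \exists i \in X(g \geq_T f_i)\} = \alpha(X)$), and to transfer its structure through the map $x \mapsto x \otimes \D$. I will lean on two consequences of the hypotheses: since $\E \subseteq \overline{\A} \subseteq \alpha(X)$ for every $X$, we have $\alpha(X) \leq_\M \E$; and since $\E \not\geq_\M \B$, we have $\D \not\leq_\M \E$, for otherwise $\E \equiv_\M \E \oplus \D \geq_\M \B$, contradicting $\D = \E \to_\M \B$. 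That $\beta$ lands in $[\beta(I),\beta(\emptyset)]_\M$ is immediate from monotonicity of $\otimes$, and $\beta$ preserves $0$ and $1$ by construction. Preservation of $\oplus$ is just distributivity followed by Theorem \ref{thm-pow-embed}:
\[\beta(X) \oplus \beta(Y) = (\alpha(X) \oplus \alpha(Y)) \otimes \D \equiv_\M \alpha(X \cap Y) \otimes \D = \beta(X \cap Y),\]
and $X \cap Y$ is the join in $(\mathcal{P}(I),\supseteq)$.

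The main computation is the implication, which must be taken inside the interval: $\beta(X) \to_{[\beta(I),\beta(\emptyset)]_\M} \beta(Y) = (\beta(X) \to_\M \beta(Y)) \oplus \beta(I)$. I would first evaluate the plain implication via the Brouwer-algebra identity $(a \otimes b) \to c = (a \to c) \oplus (b \to c)$ applied with $a = \alpha(X)$, $b = \D$, $c = \beta(Y)$. The summand $\D \to_\M \beta(Y)$ vanishes because $\beta(Y) \leq_\M \D$; and since $\alpha(X)$ is a Muchnik degree, hence canonical in $\M$ by Proposition \ref{prop-muchnik-can}, clause \eqref{canon-3} gives $\alpha(X) \to_\M (\alpha(Y) \otimes \D) = (\alpha(X) \to_\M \alpha(Y)) \otimes (\alpha(X) \to_\M \D)$. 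Currying together with $\alpha(X) \leq_\M \E$ shows $\alpha(X) \to_\M \D = (\alpha(X) \oplus \E) \to_\M \B \equiv_\M \E \to_\M \B = \D$, so $\beta(X) \to_\M \beta(Y) \equiv_\M (\alpha(X) \to_\M \alpha(Y)) \otimes \D$. Joining with $\beta(I) = \alpha(I) \otimes \D$ and distributing, this rewrites as $\big((\alpha(X) \to_\M \alpha(Y)) \oplus \alpha(I)\big) \otimes \D$, whose left factor is exactly $\alpha(X) \to_{[\alpha(I),\alpha(\emptyset)]_\M} \alpha(Y) \equiv_\M \alpha((I \setminus X) \cup Y)$ by Theorem \ref{thm-pow-embed}. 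Hence $\beta(X) \to_{[\beta(I),\beta(\emptyset)]_\M} \beta(Y) \equiv_\M \beta((I \setminus X) \cup Y) = \beta(X \to_{\mathcal{P}(I)} Y)$.

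Injectivity I would obtain by showing $\beta$ reflects the order. Suppose $\beta(P) \equiv_\M \beta(Q)$ with, say, $P \subseteq Q$ (so $\alpha(P) \geq_\M \alpha(Q)$). Joining $\alpha(P) \otimes \D \equiv_\M \alpha(Q) \otimes \D$ with $\alpha(Q)$ and distributing gives $\alpha(P) \otimes (\D \oplus \alpha(Q)) \equiv_\M \alpha(Q)$. As $\alpha(Q)$ is meet-irreducible in $\M$ (being Muchnik), either $\alpha(P) \equiv_\M \alpha(Q)$, whence $P = Q$ since $\alpha$ is injective, or $\D \leq_\M \alpha(Q) \leq_\M \E$, which is ruled out by $\D \not\leq_\M \E$.

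It remains to see the range is canonical in $[\beta(I),\beta(\emptyset)]_\M$. Closure under $\oplus$ and $\to$ is the content of the first two paragraphs, and clause \eqref{canon-3} follows by repeating the implication manipulation: for $\C_0, \C_1$ in the interval one has $\C_0, \C_1 \leq_\M \beta(\emptyset) \leq_\M \D$, so every summand involving $\D \to_\M (-)$ vanishes and the identity reduces to clause \eqref{canon-3} for the canonical degree $\alpha(X)$, the extra $\oplus \beta(I)$ being absorbed by distributivity. The main obstacle is clause \eqref{canon-1}, meet-irreducibility, since the $\beta(X)$ are no longer Muchnik and Proposition \ref{prop-muchnik-can} no longer applies directly. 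I would handle it by the same join-with-$\alpha(X)$ device: if $\beta(X) = \C_0 \otimes \C_1$ with $\C_i$ in the interval, then joining with $\alpha(X)$ gives $(\C_0 \oplus \alpha(X)) \otimes (\C_1 \oplus \alpha(X)) \equiv_\M \alpha(X)$, and meet-irreducibility of the Muchnik degree $\alpha(X)$ forces, say, $\C_0 \leq_\M \alpha(X)$; combined with $\C_0 \leq_\M \beta(\emptyset) \leq_\M \D$ this gives $\C_0 \leq_\M \alpha(X) \otimes \D = \beta(X)$, and since $\C_0 \geq_\M \beta(X)$ we conclude $\C_0 \equiv_\M \beta(X)$.
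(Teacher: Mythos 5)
Your proposal is correct, and for most of the theorem it coincides with the paper's own proof: the paper likewise writes $\beta(X) = \alpha(X) \otimes \D$ with $\alpha$ the embedding of Theorem \ref{thm-pow-embed}, gets preservation of $\oplus$ from distributivity, and proves preservation of implication by exactly your computation (the summand $\D \to_\M (\alpha(Y) \otimes \D)$ vanishes, Proposition \ref{prop-muchnik-can} splits $\alpha(X) \to_\M (\alpha(Y) \otimes \D)$, currying plus $\E \subseteq \alpha(X)$ gives $\alpha(X) \to_\M \D \equiv_\M \D$, and the extra $\beta(I)$ is absorbed by distributivity); your treatment of clauses (ii) and (iii) of canonicity is also the paper's. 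The one genuinely different step is injectivity. The paper argues computability-theoretically: from a functional $\Phi_n$ witnessing $\alpha(X) \otimes \D \leq_\M \alpha(Y)$ it extracts a string $\sigma$ with $\Phi_n(\sigma)(0)=1$, observes that $\sigma \conc \E \subseteq \alpha(Y)$ because $\E \subseteq \alpha(Y)$ and $\alpha(Y)$ is Muchnik, and thereby reduces $\D$ to $\E$, contradicting $\E \not\geq_\M \D$; this yields the order reflection $\beta(X) \leq_\M \beta(Y) \Rightarrow X \supseteq Y$ directly. You instead stay inside the lattice: join with $\alpha(Q)$, distribute, and invoke meet-irreducibility of the Muchnik degree $\alpha(Q)$ together with $\D \not\leq_\M \alpha(Q)$ (as $\alpha(Q) \leq_\M \E$). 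This is a genuine simplification in that it reuses Proposition \ref{prop-muchnik-can} rather than reopening the definition of Medvedev reducibility; the same device also powers your proof of clause (i), which the paper obtains slightly more directly from $\C_0 \otimes \C_1 \leq_\M \alpha(X)$ by transitivity.

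One small repair is needed in your injectivity paragraph: assuming $\beta(P) \equiv_\M \beta(Q)$ together with $P \subseteq Q$ is not a legitimate without-loss-of-generality reduction, since $P$ and $Q$ need not be comparable. The cleanest fix is to run your device on an inequality rather than an equivalence: if $\beta(X) \leq_\M \beta(Y)$, joining both sides with $\alpha(Y)$ and distributing gives $(\alpha(X) \oplus \alpha(Y)) \otimes (\D \oplus \alpha(Y)) \leq_\M \alpha(Y)$, and meet-irreducibility of $\alpha(Y)$ (equivalently meet-primality, since $\M$ is distributive) forces either $\alpha(X) \leq_\M \alpha(Y)$, hence $X \supseteq Y$, or $\D \leq_\M \alpha(Y) \leq_\M \E$, which is impossible; this gives order reflection, and in particular injectivity, outright. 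Alternatively, reduce the general case to your comparable one via join-preservation: from $\beta(P) \equiv_\M \beta(Q)$ one gets $\beta(P \cap Q) \equiv_\M \beta(P) \oplus \beta(Q) \equiv_\M \beta(P)$ with $P \cap Q \subseteq P$, and symmetrically for $Q$, so your argument applied twice yields $P = P \cap Q = Q$. With either patch your proof is complete.
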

\begin{proof}
First, note that $\E \not\geq_\M \D$, since if $\E \geq_\M \D$ then
\[\E \equiv_\M \E \oplus \D = \E \oplus (\E \to \B) \geq_\M \B,\]
a contradiction.

As in the proof of Theorem \ref{thm-pow-embed}, if $X \subseteq I$ we will denote by $C(X)$ the set $C(\{f_i \mid i \in X\})$. By Theorem \ref{thm-pow-embed}, the function $\alpha: \mathcal{P}(I) \to \mathrm{\M / \overline{\A}}$ given by $\alpha(X) = \overline{\A} \cup C(X)$ is an upper implicative semilattice embedding of $(\mathcal{P}(I),\supseteq)$ into $\Big[\overline{\A} \cup C(I),\overline{\A}\Big]_\M$. Note that $\E \subseteq \overline{\A}$ and therefore $\E \subseteq \alpha(X)$ for every $X \subseteq I$.

Now let $\beta: \mathcal{P}(I) \to \mathrm{\M / \overline{\A}}$ be the function given by $\beta(X) = \alpha(X) \otimes \D$. Then the range of $\beta$ is certainly contained in $[\beta(I),\beta(\emptyset)]_\M$. We prove that $\beta$ is in fact an upper implicative semilattice embedding into $[\beta(I),\beta(\emptyset)]_\M$ with canonical range.

\begin{itemize}
\item $\beta$ is injective: assume $\beta(X) \leq_\M \beta(Y)$. Thus, we have $\alpha(X) \otimes \D \leq_\M \alpha(Y) \otimes \D$. In particular we have that $\alpha(X) \otimes \D \leq_\M \alpha(Y)$, say via $\Phi_n$. We claim: $\Phi_n(\alpha(Y)) \subseteq 0 \conc \alpha(X)$.

Namely, assume towards a contradiction that $\Phi_n(f) \in 1 \conc \D$ for some $f \in \alpha(Y)$. Determine $\sigma \subseteq f$ such that $\Phi_n(\sigma)(0) = 1$. As noted above we have that $\E \subseteq \alpha(Y)$, and since $\alpha(Y)$ is Muchnik we therefore see that $\sigma \conc \E \subseteq \alpha(Y)$. However, then we can reduce $\E$ to $1 \conc \D$ by sending $g \in \E$ to $\Phi_n(\sigma \conc g)$, and therefore $\E \geq_\M \D$, a contradiction.

Thus, $\alpha(X) \leq_\M \alpha(Y)$, and since $\alpha$ is an upper implicative semilattice embedding this tells us that $X \supseteq Y$.

\item $\beta$ preserves joins: we have
\begin{align*}
\beta(X \oplus Y) &= \alpha(X \oplus Y) \otimes \D \equiv_\M (\alpha(X) \oplus \alpha(Y)) \otimes \D\\
&\equiv_\M (\alpha(X) \otimes \D) \oplus (\alpha(Y) \otimes \D) = \beta(X) \oplus \beta(Y).
\end{align*}

\item $\beta$ preserves implications: we have
\begin{align*}
\beta(X) &\to_{[\beta(I),\beta(\emptyset)]_\M} \beta(Y)\\
&= ((\alpha(X) \otimes \D) \to_\M (\alpha(Y) \otimes \D)) \oplus \beta(I)\\
&\equiv_\M ((\alpha(X) \to_\M (\alpha(Y) \otimes \D)) \oplus (\D \to_\M (\alpha(Y) \otimes \D))) \oplus \beta(I)\\
&\equiv_\M ((\alpha(X) \to_\M (\alpha(Y) \otimes \D)) \oplus \omega^\omega) \oplus \beta(I)\\
&\equiv_\M (\alpha(X) \to_\M (\alpha(Y) \otimes \D)) \oplus \beta(I).\\
\intertext{Next, using Proposition \ref{prop-muchnik-can} we see:}
&\equiv_\M ((\alpha(X) \to_\M \alpha(Y)) \otimes (\alpha(X) \to_\M \D)) \oplus \beta(I)\\
&= ((\alpha(X) \to_\M \alpha(Y)) \otimes (\alpha(X) \to_\M (\E \to_\M \B))) \oplus \beta(I)\\
&\equiv_\M ((\alpha(X) \to_\M \alpha(Y)) \otimes ((\alpha(X) \oplus \E) \to_\M \B)) \oplus \beta(I).\\
\intertext{As noted above, we have $\E \subseteq \alpha(X)$, and therefore:}
&\equiv_\M ((\alpha(X) \to_\M \alpha(Y)) \otimes (\E \to_\M \B)) \oplus \beta(I)\\
&\equiv_\M ((\alpha(X) \to_\M \alpha(Y)) \otimes \D) \oplus (\alpha(I) \otimes \D).\\
&\equiv_\M ((\alpha(X) \to_\M \alpha(Y)) \oplus \alpha(I)) \otimes \D\\
&= \left(\alpha(X) \to_{[\alpha(I),\alpha(\emptyset)]_\M} \alpha(Y)\right) \otimes \D\\
&= \alpha(X \to_{\mathcal{P}(I)} Y) \otimes \D\\
&= \beta(X \to_{\mathcal{P}(I)} Y).
\end{align*}

\item $\beta$ has canonical range:
\begin{enumerate}[\rm (i)]
\item Let $X \subseteq I$, we show that that $\beta(X)$ is meet-irreducible in $[\beta(I),\beta(\emptyset)]_\M$. Indeed, let $\C_0,\C_1 \leq_\M \beta(\emptyset)$ be such that $\C_0 \otimes \C_1 \leq_\M \alpha(X) \otimes \D$. Then $\C_0 \otimes \C_1 \leq_\M \alpha(X)$, and since $\alpha(X)$ is Muchnik, we see from Proposition \ref{prop-muchnik-can} that $\C_0 \leq_\M \alpha(X)$ or $\C_1 \leq_\M \alpha(X)$. Since $\C_0,\C_1 \leq_\M \beta(\emptyset) \leq_\M \D$ this shows that in fact $\C_0 \leq_\M \beta(X)$ or $\C_1 \leq_M \beta(X)$.
\item The range of $\beta$ is clearly closed under implication and joins.
\item Let $X \subseteq \omega$ and let $\C_0,\C_1 \in [\beta(I),\beta(\emptyset)]_\M$.
Then we have:
\begin{align*}
\beta(X) &\to_{[\beta(I),\beta(\emptyset)]_\M} (\C_0 \otimes \C_1)\\
&=(\alpha(X) \otimes \D) \to_{[\beta(I),\beta(\emptyset)]_\M} (\C_0 \otimes \C_1)\\
&=((\alpha(X) \otimes \D) \to_\M (\C_0 \otimes \C_1)) \oplus \beta(I)\\
&\equiv_\M (\alpha(X) \to_\M (\C_0 \otimes \C_1)) \oplus \beta(I),\\
\intertext{because $\C_0$ and $\C_1$ are below $\beta(\emptyset)$ and hence below $\D$. Since $\alpha(X)$ is Muchnik, we now see from Proposition \ref{prop-muchnik-can}:}
&= ((\alpha(X) \to_\M \C_0) \otimes (\alpha(X) \to_\M \C_1)) \oplus \beta(I)\\
&\equiv_\M (\beta(X) \to_{[\beta(I),\beta(\emptyset)]_\M} \C_0) \otimes (\beta(X) \to_{[\beta(I),\beta(\emptyset)]_\M} \C_1).\qedhere
\end{align*}
\end{enumerate}
\end{itemize}
\end{proof}

We can now prove there is a principal factor of the Medvedev lattice with theory IPC below a given $\B > 0'$.

\begin{thm}\label{thm-factor-rel}
Let $\B$ be a mass problem, let $\E$ be a countable mass problem such that $\E \not\geq_\M \B$ and let $\D = \E \to \B$ (so, $\D \leq_\M \B$). Let $A$ be a computably independent set such that for all $f \in \E$ we have $A \not\geq_T f$. Then
\[\mathrm{Th}\left(\M / \left(\left\{i \conc g \mid g \geq_T A^{[i]} \text{ or } g \in C(\E)\right\} \otimes \D\right)\right) = \mathrm{IPC}.\]
\end{thm}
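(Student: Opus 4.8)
The plan is to mirror the proof of Theorem \ref{thm-main}, replacing the embedding $\alpha$ of Theorem \ref{thm-pow-embed} by the relativised embedding $\beta$ of Theorem \ref{thm-pow-embed-rel}. Write $\mathcal{F}_0 = \{i \conc g \mid g \geq_T A^{[i]} \text{ or } g \in C(\E)\}$, so that the mass problem generating our factor is $\mathcal{F}_0 \otimes \D$. As always $\mathrm{IPC} \subseteq \Th(\M / (\mathcal{F}_0 \otimes \D))$ is soundness, so only the reverse inclusion is at issue. Following the reduction in the proof of Theorem \ref{thm-main}, I fix $n \in \omega$ and $x \in \BB_n$, put $I = \{1,\dots,n\}$, and aim to produce a downwards closed $\A$ with $\E \subseteq \overline{\A}$, an antichain $D_1,\dots,D_n$ in $\A$, and sets $X_1,\dots,X_k \subseteq I$ with
\[\beta(I) \oplus (\mathcal{F}_0 \otimes \D) \equiv_\M \beta(X_1) \otimes \dots \otimes \beta(X_k).\]
Granting this, Theorem \ref{thm-pow-embed-rel} shows $\beta$ has canonical range, so exactly as in Corollary \ref{cor-n-isom} (via Propositions \ref{prop-free-canonical} and \ref{prop-free-n-isom}) the degrees $\{\beta(Y_1) \otimes \dots \otimes \beta(Y_m)\}$ form a sub-Brouwer algebra of $[\beta(I),\beta(\emptyset)]_\M$ isomorphic to $\BB_n$, inside which $\BB_n / x$ embeds into $[\beta(I),\beta(X_1) \otimes \dots \otimes \beta(X_k)]_\M$; then Lemma \ref{lem-intervals} gives $\Th(\M / (\mathcal{F}_0 \otimes \D)) \subseteq \Th(\BB_n / x)$, and intersecting over all $n$ and $x$ yields $\mathrm{IPC}$ by Proposition \ref{prop-free-n-ipc}.

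For the construction I first strengthen Lemma \ref{lem-extend}: since $A \not\geq_T f$ for every $f \in \E$ and $\E$ is countable, the finite-extension argument can additionally force $A \oplus B \not\geq_T f$ for all $f \in \E$ while keeping $A \oplus B$ computably independent. I then set $\A = \omega^\omega \setminus (C(\{(A\oplus B)'\}) \cup C(\E))$, so that $\A$ is downwards closed and $\E \subseteq C(\E) \subseteq \overline{\A}$. As in Theorem \ref{thm-main} I choose each $D_i$ by (a cone-avoiding strengthening of) Theorem \ref{thm-splitting-ext} with base $A_i = (\bigoplus_{1 \le j \le k,\ i \in X_j} A^{[j]}) \oplus B^{[i]}$ and $E = (A\oplus B)'$, retaining $D_i' \leq_T (A\oplus B)'$ together with the cupping conditions on $D_i \oplus A^{[j]}$ and $D_i \oplus B^{[j]}$ exactly as there; the new demand is that in addition $D_i \not\geq_T f$ for every $f \in \E$. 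Since $D_i' \leq_T (A\oplus B)'$ forces $D_i \not\geq_T (A\oplus B)'$, this puts $D_1,\dots,D_n \in \A$, while $D_i \oplus D_j \geq_T B^{[i]} \oplus D_j \geq_T (A\oplus B)'$ shows $D_i \oplus D_j \in \overline{\A}$ for $i \neq j$; hence $\{D_1,\dots,D_n\}$ is an antichain in $\A$.

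It remains to verify the displayed equation, and here I would use that $\M$ is distributive and that $\D \otimes \dots \otimes \D \equiv_\M \D$. Writing $\beta(X) = \alpha(X) \otimes \D$ with $\alpha(X) = \overline{\A} \cup C(\{D_i \mid i \in X\})$ as in Theorem \ref{thm-pow-embed-rel}, the right-hand side is $\equiv_\M (\alpha(X_1) \otimes \dots \otimes \alpha(X_k)) \otimes \D$ and the left-hand side is $\equiv_\M (\alpha(I) \oplus \mathcal{F}_0) \otimes \D$, so it is enough to show $\alpha(I) \oplus \mathcal{F}_0 \equiv_\M \alpha(X_1) \otimes \dots \otimes \alpha(X_k)$. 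This is precisely the computation at the end of the proof of Theorem \ref{thm-main}, and it survives the replacement of $\{i \conc f \mid f \geq_T A^{[i]}\}$ by $\mathcal{F}_0$: the only extra elements are the $i \conc g$ with $g \in C(\E)$, and because we arranged $C(\E) \subseteq \overline{\A}$, any such $g$ already lies in $\overline{\A} \subseteq \alpha(X_j)$ for every $j$, so it contributes nothing new to either reduction.

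The main obstacle is therefore not the algebra, which transfers from Theorem \ref{thm-main} once the factor is set up correctly, but the computability-theoretic step of arranging $D_i \not\geq_T f$ for all $f \in \E$ simultaneously with the Posner--Robinson cupping conditions of Theorem \ref{thm-splitting-ext}. For $f \in \E$ with $f \not\leq_T (A\oplus B)'$ avoidance is automatic, since $D_i \leq_T (A\oplus B)'$; for the remaining $f \in \E$ we have $f \leq_T (A\oplus B)' = E$ while, by the strengthening of Lemma \ref{lem-extend}, $f \not\leq_T A\oplus B$ and hence $f \not\leq_T A_i$, so the standard cone-avoidance can be folded into the finite-extension construction underlying Theorem \ref{thm-splitting-ext}. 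Making this cone-avoiding version of Theorem \ref{thm-splitting-ext} precise is the one genuinely new ingredient of the proof.
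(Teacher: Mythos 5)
Your algebraic superstructure is exactly the paper's: the reduction to $\BB_n/x$ via Theorem \ref{thm-pow-embed-rel}, Lemma \ref{lem-intervals} and Proposition \ref{prop-free-n-ipc}, the strengthening of Lemma \ref{lem-extend}, the choice of $\A$ so that $C(\E) \subseteq \overline{\A}$, the distributivity computation reducing the displayed equivalence to $\alpha(I) \oplus \mathcal{F}_0 \equiv_\M \alpha(X_1) \otimes \dots \otimes \alpha(X_k)$, and the observation that the extra elements $i \conc g$ with $g \in C(\E)$ cost nothing in either reduction -- all of this matches the paper and is correct. The gap is exactly at the step you flag as ``the one genuinely new ingredient'', and it is more serious than you suggest. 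The construction behind Theorem \ref{thm-splitting-ext} must be computable in $E = (A \oplus B)'$: that is what yields both the jump bound $D_i' \leq_T E$ and the decodability of the coding giving $D_i \oplus B_m \geq_T E$. A cone-avoidance step for $f \in \E$ with $f \leq_T E$ requires the construction to consult values of $f$ in order to pick the diagonalizing extension, hence requires an $E$-index for $f$; but $\E$ is an arbitrary countable mass problem, so even those of its members that happen to be computable in $(A \oplus B)'$ need not be \emph{uniformly} so, and an $E$-computable construction has no way to diagonalize against all of them. (There is also a smaller issue: deciding the Sacks case distinction for a single $f \leq_T E$ is a $\Sigma_1^{A_i \oplus f}$ question, so a priori only decidable in $E'$; this one is repairable by searching for $e$-splittings, which is a $\Sigma_1^{A_i}$ question and hence $E$-decidable. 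The uniformity problem is not repairable within your setup.)

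The paper sidesteps the need for any new computability theory by enlarging the ground set rather than the theorem. Enumerate $\E = \{f_0, f_1, \dots\}$, let $E_j$ be the graph of $f_j$, and let $U$ satisfy $U^{[0]} = A$ and $U^{[j+1]} = E_j$; then work with $(U \oplus B)'$ in place of $(A \oplus B)'$ throughout, i.e.\ take $\A = \omega^\omega \setminus C(\{(U \oplus B)'\} \cup \E)$ and make all cupping targets $(U \oplus B)'$. Now the $E_j$ are uniformly computable in $U \leq_T (U \oplus B)'$, and $A_i \not\geq_T E_j$ by your strengthened Lemma \ref{lem-extend}, so Theorem \ref{thm-splitting-ext} applies \emph{as stated} with the $E_j$ simply added to the list of cupped sets, giving $D_i \oplus E_j \geq_T (U \oplus B)'$ for all $j$. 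Cone avoidance then comes for free instead of having to be forced: if $D_i \geq_T f_j$ then $D_i \equiv_T D_i \oplus E_j \geq_T (U \oplus B)' \geq_T D_i'$, which is impossible; hence $D_i \in \A$, and the rest of your argument goes through verbatim relative to $(U \oplus B)'$. Note that any attempt to repair your uniformity problem forces you to enlarge $E$ so that it uniformly computes $\E$ -- which is precisely the paper's trick, and once you have it, the cone-avoiding version of Theorem \ref{thm-splitting-ext} is no longer needed at all.
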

\begin{proof}
The proof largely mirrors that of Theorem \ref{thm-main}. Let $\E = \{f_0,f_1,\dots\}$, let $E_i$ be the graph of $f_i$ and let $U$ be such that $U^{[0]} = A$ and $U^{[i+1]} = E_i$. Then $A,E_0,E_1,\dots$ is uniformly computable in $U$.

We need to make a slight modification to Lemma \ref{lem-extend}: we not only want $A \oplus B$ to be computably independent, but we also need to make sure that $A \oplus B \not\geq_T f$ for every $f \in \E$. This modification is straightforward and we omit the details.
The requirements on $D_i$ are slightly different: 
we now want for every $1 \leq i \leq k$ that $D_i \geq_T \bigoplus_{1 \leq j \leq k, i \in X_j} A^{[j]} \oplus B^{[i]}$, that $D_i' \leq_T (U \oplus B)'$, that $D_i \oplus A^{[j]} \geq_T (U \oplus B)'$ for every $j \in \{1 \leq j \leq k \mid i \not\in X_j\} \cup \{k+1,k+2,\dots\}$, that $D_i \oplus B^{[j]} \geq_T (U \oplus B)'$ for every $j \not= i$ and that $D_i \oplus E_j \geq_T (U \oplus B)'$ for all $j \in \omega$; this is still possible by Theorem \ref{thm-splitting-ext}. We change the definition of $\A$ into $\A = \omega^\omega \setminus C(\{(U \oplus B)'\} \cup \E)$. Then we still have $D_i \in \A$, because $D_i \geq_T f_j$ would imply that $D_i \geq_T D_i \oplus E_j \geq_T (U \oplus B)'$, a contradiction. Finally, replace $\alpha$ with the $\beta$ of Theorem \ref{thm-pow-embed-rel} and change \eqref{thm-main-eqn1} into
\begin{align*}
\left(\Big(\overline{\A} \cup C(\{D_1,\dots,D_n\})\Big) \otimes \D\right) &\oplus \left(\left\{i \conc g \mid g \geq_T A^{[i]} \text{ or } g \in C(\E)\right\} \otimes \D\right)\\
&\equiv_\M \beta(X_1) \otimes \dots \otimes \beta(X_k).
\end{align*}
Then the whole proof of Theorem \ref{thm-main} goes through.
\end{proof}

In particular, this allows us to give a positive answer to the question mentioned at the beginning of this section.

\begin{cor}\label{cor-jan}
Let $\B >_\M 0'$. Then $\mathrm{Th}(\M / \B) \subseteq \mathrm{Jan}$.
\end{cor}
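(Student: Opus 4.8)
The plan is to derive Corollary~\ref{cor-jan} from Theorem~\ref{thm-factor-rel} together with the monotonicity lemmas already developed in the paper. The key structural facts are: first, that $\mathrm{Jan}$ is exactly the theory of the full Medvedev lattice (stated in the introduction), so $\mathrm{Th}(\M) = \mathrm{Jan}$; and second, that whenever $\C \leq_\M \B$ we have a surjective homomorphism or an interval comparison that lets us relate $\mathrm{Th}(\M/\B)$ to $\mathrm{Th}(\M/\C)$. The crucial observation is that factoring out a \emph{larger} mass problem yields a \emph{smaller} theory, so producing any factor below $\B$ with theory $\mathrm{IPC}$ pins $\mathrm{Th}(\M/\B)$ between $\mathrm{IPC}$ and $\mathrm{Jan}$.

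\emph{First} I would reduce to the case $\B >_\M 0'$ and invoke the paragraph opening Section~\ref{sec-to-factors}'s relativisation: for such $\B$ one can fix a countable mass problem $\E \subseteq 0'$ with $\E \not\geq_\M \B$ (witnessing the failures $\Phi_n(0') \not\subseteq \B$), and then the set of reals avoiding the upper cones of all $f \in \E$ has measure one by Sacks's theorem, hence contains a $1$-random, hence a computably independent $A$ with $A \not\geq_T f$ for all $f \in \E$. \emph{Second}, with this $\E$ and $A$ in hand, set $\D = \E \to_\M \B$ and apply Theorem~\ref{thm-factor-rel} to obtain the factor
\[
\C = \left\{i \conc g \mid g \geq_T A^{[i]} \text{ or } g \in C(\E)\right\} \otimes \D,
\]
for which $\mathrm{Th}(\M/\C) = \mathrm{IPC}$. \emph{Third}, I would check that $\C \leq_\M \B$: since $\D = \E \to_\M \B \leq_\M \B$ and the second factor of a $\otimes$ is Medvedev-above the whole (indeed $\X \otimes \D \leq_\M \D$ as $\D$ maps into the right-hand summand via $g \mapsto 1 \conc g$), we get $\C \leq_\M \D \leq_\M \B$.

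\emph{Fourth} comes the order-reversal step, which I expect to be the only real subtlety. Because $\C \leq_\M \B$, the upper cone $\{z \mid z \geq_\M \C\}$ contains $\{z \mid z \geq_\M \B\}$, so there is a natural surjective Brouwer-algebra homomorphism $\M/\C \to \M/\B$ (equivalently, identifying $\M/x$ with $[0,x]_\M$, the map $z \mapsto z \oplus \B$ sends $[0,\C]_\M$ onto $[0,\B]_\M$ and preserves $\oplus$, $\otimes$, $\to$ exactly as in the proof of Lemma~\ref{lem-intervals}). By Lemma~\ref{surj-th} this gives $\mathrm{Th}(\M/\C) \subseteq \mathrm{Th}(\M/\B)$; but one must be careful that the inclusion runs the right way, so let me instead route through Lemma~\ref{lem-intervals} directly: identifying $\M/\B \cong [0,\B]_\M$ and writing $\B \equiv_\M \C \oplus \B$, the lemma yields $\mathrm{Th}([0,\C]_\M) \subseteq \mathrm{Th}([\C,\B]_\M) \subseteq \mathrm{Th}([0,\B]_\M)$ is \emph{not} what is wanted; rather, the projection $[0,\B]_\M \twoheadrightarrow [0,\C]_\M$ given by meeting with $\C$ forces $\mathrm{Th}(\M/\B) \subseteq \mathrm{Th}(\M/\C) = \mathrm{IPC}$. \emph{Finally}, combining this with the soundness direction $\mathrm{IPC} \subseteq \mathrm{Th}(\M/\B)$, which holds for every Brouwer algebra, gives $\mathrm{Th}(\M/\B) = \mathrm{IPC} \subseteq \mathrm{Jan}$, establishing the corollary a fortiori. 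The main obstacle is thus purely bookkeeping: getting the variance of the factor map right so that enlarging the quotiented ideal shrinks the theory, and confirming that the homomorphism $\M/\B \to \M/\C$ (not its reverse) is the surjection to which Lemma~\ref{surj-th} applies.
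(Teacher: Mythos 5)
Your steps (1)--(3) match the paper: Theorem \ref{thm-factor-rel} supplies $\C = \{i \conc g \mid g \geq_T A^{[i]} \text{ or } g \in C(\E)\} \otimes \D$ with $\mathrm{Th}(\M/\C) = \mathrm{IPC}$ and $\C \leq_\M \D \leq_\M \B$. The gap is step (4), and it is not bookkeeping but a genuine failure. Neither map you propose is a surjective Brouwer-algebra homomorphism in the relevant direction: the map $z \mapsto z \oplus \B$ on $[0,\C]_\M$ is constantly $\B$ (since $z \leq_\M \C \leq_\M \B$), and the projection $z \mapsto z \otimes \C$ from $[0,\B]_\M$ onto $[0,\C]_\M$, while a lattice homomorphism by distributivity, does \emph{not} commute with $\to$. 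Collapsing a filter is not a Brouwer-algebra congruence (dually to the Heyting case, Brouwer congruences correspond to ideals, not filters). A concrete counterexample: in the three-element chain $0 < a < 1$ viewed as a Brouwer algebra, $(a \to 1) \otimes a = 1 \otimes a = a$, whereas $(a \otimes a) \to (1 \otimes a) = a \to a = 0$. So Lemma \ref{surj-th} does not apply, and the guiding principle you state --- that factoring out a larger mass problem yields a smaller theory --- is false. Indeed your conclusion $\mathrm{Th}(\M/\B) = \mathrm{IPC}$ for \emph{every} $\B >_\M 0'$ is refuted by $\B = \emptyset$ (the top degree, which is $>_\M 0'$): then $\M/\B = \M$ and $\mathrm{Th}(\M) = \mathrm{Jan} \ni \neg p \vee \neg\neg p \notin \mathrm{IPC}$. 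Put differently, if your projection were a Brouwer homomorphism, applying it inside $\M$ with Skvortsova's factor would give $\mathrm{Jan} = \mathrm{Th}(\M) \subseteq \mathrm{IPC}$, which is absurd; the entire subject exists because factor theories are not monotone in the factored element.

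The paper's proof is designed precisely to dodge this obstruction, and the missing idea is Jankov's characterisation of $\mathrm{Jan}$: an intermediate logic is contained in $\mathrm{Jan}$ if and only if its \emph{positive} (negation-free) fragment coincides with $\mathrm{IPC}^{+}$. One then uses not a projection but the inclusion $[0,\C]_\M \hookrightarrow [0,\B]_\M$, which preserves $\oplus$, $\otimes$, $\to$ and $0$ but \emph{not} the top element (the top of $[0,\C]_\M$ is $\C$, not $\B$, and the top interprets $\bot$). Since positive formulas never mention $\bot$, their interpretations agree in the subalgebra and the ambient algebra; hence any valuation in $\M/\C$ refuting a positive formula refutes it in $\M/\B$ as well, giving $\mathrm{Th}^{+}(\M/\B) \subseteq \mathrm{Th}^{+}(\M/\C) = \mathrm{IPC}^{+}$. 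This transfers \emph{only} the positive fragment --- formulas involving negation need not transfer, which is exactly why the conclusion is $\mathrm{Th}(\M/\B) \subseteq \mathrm{Jan}$ rather than $\subseteq \mathrm{IPC}$, and why that is the best possible bound.
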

\begin{proof}
Since an intermediate logic is contained in $\mathrm{Jan}$ if and only if its positive fragment coincides with $\mathrm{IPC}$ (see Jankov \cite{jankov-1968}),
we need to show that, denoting the positive fragment by $^{+}$, we have that $\mathrm{Th}^{+}(\M / \B) \subseteq \mathrm{IPC}^{+}$. By Theorem \ref{thm-factor-rel} there exists a $\C \leq_\M \B$ such that $\mathrm{Th}(\M / \C) = \mathrm{IPC}$. Then $\M / \C$ is a subalgebra of $\M / \B$, except for the fact that the top element is not necessarily preserved. However, it can be directly verified that for any two Brouwer algebras $\BC$ and $\BB$ for which $\BC$ is a $(\oplus,\otimes,\to,0)$-subalgebra of $\BB$ we have for all positive formulas $\phi(x_1,\dots,x_n)$ and all elements $b_1,\dots,b_n \in \BB$ that the interpretation of $\phi$ at $b_1,\dots,b_n$ is the same in both $\BC$ and $\BB$. Since we can refute every positive formula $\phi$ which is not in $\mathrm{IPC}^+$ in $\M / \C$, we can therefore refute it in $\M / \B$ using the same valuation. In other words, $\mathrm{Th}^{+}(\M / \B) \subseteq \mathrm{Th}^{+}(\M / \C) = \mathrm{IPC}^{+}$, as desired.
\end{proof}

\section*{Acknowledgements}

The author thanks Sebastiaan Terwijn for helpful discussions on the subject. Furthermore, the author thanks the anonymous referees for their many useful comments.

\providecommand{\bysame}{\leavevmode\hbox to3em{\hrulefill}\thinspace}
\providecommand{\MR}{\relax\ifhmode\unskip\space\fi MR }
\providecommand{\MRhref}[2]{%
  \href{http://www.ams.org/mathscinet-getitem?mr=#1}{#2}
}
\providecommand{\href}[2]{#2}

\end{document}